\documentclass[11pt]{article}
\usepackage[utf8]{inputenc}
\usepackage[T1]{fontenc}
\usepackage[a4paper,margin=2.9cm]{geometry}
\usepackage{amsmath,amssymb,amsthm}
\usepackage{enumitem}
\usepackage{float}
\usepackage[expansion=false]{microtype}
\usepackage[colorlinks=true,linkcolor=black,citecolor=black,urlcolor=blue]{hyperref}

\allowdisplaybreaks
\setlist{nosep,leftmargin=*}

\theoremstyle{plain}
\newtheorem{theorem}{Theorem}[section]
\newtheorem{proposition}[theorem]{Proposition}
\newtheorem{lemma}[theorem]{Lemma}
\newtheorem{corollary}[theorem]{Corollary}
\newtheorem{conjecture}[theorem]{Conjecture}

\theoremstyle{remark}
\newtheorem{remark}[theorem]{Remark}

\newcommand{\Z}{\mathbb{Z}}
\newcommand{\Q}{\mathbb{Q}}
\newcommand{\Hur}{\mathcal{H}}
\DeclareMathOperator{\ord}{ord}
\title{\bfseries Modular periodicity of the Euler up/down numbers\\
at odd prime powers}

\author{Berke G\"ule\c{c}\\[2pt]
\small Departments of Control and Automation Engineering and Electronics and Communication Engineering,\\
\small Faculty of Electrical and Electronics Engineering, Istanbul Technical University,\\
\small 34469 Maslak, Istanbul, T\"urkiye\\
\small \texttt{gulecb21@itu.edu.tr}}
\date{}
\begin{document}
\maketitle

\begin{abstract}
\noindent
Let $E_n$ denote the number of alternating permutations of $\{1,\dots,n\}$,  equivalently characterized by
 $\sum_{n\ge0}E_nz^n/n!=\sec z+\tan z$. For every $q\ge1$, the sequence
$(E_n\bmod q)_{n\ge0}$ is eventually periodic; let $d(q)$ and $s(q)$ denote its minimal
eventual period and preperiod. For every odd prime $p$, Knuth and Buckholtz proved
$d(p)=\operatorname{lcm}(p-1,4)$ together with
\[
d(p^r)\mid p^{r-1}d(p),
\qquad
s(p^r)\le r,
\]
and Ramassamy conjectured that both bounds are attained for every $r\ge1$.

In this paper, we introduce an algebraic frequency expansion for the Euler zig-zag
numbers using Hurwitz series over the coefficient ring
$S_r=(\mathbb Z/p^r\mathbb Z)[x]/(x^2+1)$. More precisely, the corresponding
Hurwitz series is represented as a finite combination of formal exponential modes,
in a manner reminiscent of Fourier analysis.Using this expansion, we prove 
\[
d(p^r)=p^{r-1}d(p)
\qquad
\text{for every odd prime $p$ and every $r\ge1$},
\]
thereby establishing Ramassamy's period conjecture. We also disprove the preperiod
conjecture by proving
\[
s(5^5)=4<5.
\]
Finally, we prove that $5^5$ is the smallest odd prime power for which
$s(p^r)\ne r$, and based on our findings we further conjecture 
\[
s(p^r)\ge r-2
\]
for every odd prime $p$ and every $r\ge2$.
\end{abstract}

\noindent\textbf{Keywords.} Euler up/down numbers; alternating permutations; modular
periodicity; preperiod; Hurwitz series; prime-power moduli.\\
\textbf{2020 MSC.} Primary 11B50; Secondary 11B68, 11A07, 05A15.

\section{Introduction}
For $n\ge0$, let $E_n$ denote the number of alternating permutations of
$\{1,\dots,n\}$. Andr\'e~\cite{Andre} showed that
\begin{equation}\label{eq:andre}
\sum_{n\ge0}E_n\frac{z^n}{n!}=\sec z+\tan z.
\end{equation}
The Euler up/down (also known as zig-zag) numbers $(E_n)_{n\ge0}$ have subsequently been studied from
various combinatorial and arithmetic viewpoints; see, for example,
Entringer~\cite{Entringer}, Millar, Sloane and Young~\cite{MSY},
Arnold~\cite{Arnold}, Brown, Fink and Willbrand~\cite{BrownFinkWillbrand},
and Stanley~\cite{Stanley}.

Knuth and Buckholtz~\cite{KB} proved that, for every integer $q\ge1$, the
sequence $(E_n\bmod q)_{n\ge0}$ is eventually periodic. We write $d(q)$ for
its minimal eventual period and $s(q)$ for its preperiod. For every odd prime
$p$, they further proved
\[
d(p)=\operatorname{lcm}(p-1,4),
\]
and
\begin{equation}\label{eq:kb}
d(p^r)\mid p^{r-1}d(p),
\qquad
s(p^r)\le r.
\end{equation}
Jha~\cite{Jha} later gave an alternative argument from which the bounds in
\eqref{eq:kb} follow.Ramassamy~\cite{R} conjectured that both bounds are attained:
\begin{equation}\label{eq:conj}
d(p^r)=p^{r-1}d(p),
\qquad
s(p^r)=r
\qquad(r\ge1).
\end{equation}
Our main result proves the period identity in \eqref{eq:conj} for every odd
prime $p$ and every $r\ge1$, while the preperiod identity is disproved by the
example $s(5^5)=4<5$, which turns out to be the smallest counterexample.

Since the problem concerns arithmetic modulo $p^r$, the ring $\Z/p^r\Z$ is a
natural starting point. At the same time, the trigonometric generating function
$\sec z+\tan z$ naturally gives us an intuition to work with imaginary numbers so that these trigonometric terms can be represented in another way. Thus, in
a way analogous to the passage from $\mathbb R$ to $\mathbb C$, we work in
\[
S_r:=(\Z/p^r\Z)[x]/(x^2+1),
\]
with $i=[x]$ and $i^2=-1$. In particular, $i$ has order $4$, naturally
reflecting the factor $4$ appearing in
$d(p)=\operatorname{lcm}(p-1,4)$, which we make extensive use of.

Then, we developed a tool called the \emph{frequency expansion}, whose main
idea is inspired by finite frequency decompositions familiar from digital
signal processing. To do so, we encode the Euler sequence as a Hurwitz series
$F\in S_r\langle z\rangle$. A finite frequency expansion of $F$ then yields,
at the coefficient level,
\[
F_n=\sum_k H_k(ki)^n.
\]
where the coefficients $H_k$ play a role similar to Fourier coefficients. This expansion will play a critical role and forms the basis of our findings.
We now briefly explain the role of each section.

Section~\ref{sec:periodic} develops the basic facts about eventual periods and
preperiods, Section~\ref{sec:ring} introduces the algebraic setting we mentioned above, and
Section~\ref{sec:hurwitz} constructs the frequency expansion $F=(F_n)_{n\geq0}$ formally. In
Section~\ref{sec:period}, this expansion first gives an alternative proof of the
Knuth--Buckholtz formula independently and the expansion is then used to prove the period identity
in \eqref{eq:conj}.The key step that we will see at higher prime powers is to pass from $S_r$ to
$S_r/pS_r$, where frequencies that are indistinguishable modulo $p$ can be
grouped together, the details of which are explained in the relevant part.

Section~\ref{sec:preperiod} develops
a divisibility criterion that determines the exact preperiod through
successive tests involving the Euler numbers. In particular, it gives a
criterion for each of the possibilities
$$
s(p^r)=r,\ r-1,\ r-2,\ldots,1,0.
$$ Using this criterion, we conclude that Ramassamy's preperiod conjecture is false and that the smallest counterexample is $5^5$.

We then use this criterion for computational searches and we further conjecture
$$
s(p^r)\ge r-2
$$
for every odd prime $p$ and every $r\ge2$, verified for $p^r\le10^{100000}$
\paragraph{Notation.}
For $\xi\in S_r$, we use a bar to denote its image in the quotient $S_r/pS_r$, that is,
$\bar\xi:=\xi+pS_r$. More generally, when working in $S_r/p^kS_r$, we use the same
bar notation for the image of $\xi$ in this quotient, with the ambient quotient ring always
clear from the context. For a commutative ring $A$ and for all $\alpha\in A$, we set $\alpha^0:=1_A$; in particular with the convention $0_A^{\,0}=1_A$.
\section{Eventually periodic sequences}\label{sec:periodic}
Let $a=(a_n)_{n\ge0}$ be a sequence in a set $X$. A positive integer $P$ is an
\emph{eventual period} of $a$ if there exists $t\in\mathbb Z_{\ge0}$ such that
$a_{n+P}=a_n$ for every $n\in\mathbb Z_{\ge0}$ with $n\ge t$, and we put
\[
\Pi_a:=\{P\in\Z_{\ge1}:\exists\,t\in\Z_{\ge0} \forall n\ge t,\ a_{n+P}=a_n\},
\]
so that $a$ is eventually periodic exactly when $\Pi_a\ne\varnothing$. In that case
$\Pi_a$ has a least element by well-ordering, and we set $d(a):=\min\Pi_a$, the
\emph{minimal eventual period}.

\begin{lemma}\label{lem:pi}
Let $a$ be eventually periodic and $d=d(a)$. Then $\Pi_a=\{kd:k\ge1\}$.
\end{lemma}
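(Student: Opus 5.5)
We prove the two inclusions separately, using two elementary closure properties of $\Pi_a$ followed by a division-with-remainder argument.

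\emph{Multiples of $d$ lie in $\Pi_a$.} Since $d\in\Pi_a$, there is a threshold $t$ with $a_{n+d}=a_n$ for all $n\ge t$. For any $k\ge1$ and $n\ge t$, every index $n+jd$ with $0\le j<k$ is also at least $t$. Chaining the relation $k$ times therefore gives
\[
a_{n+kd}=a_{n+(k-1)d}=\dots=a_n,
\]
which is a formal induction on $k$. Hence $kd\in\Pi_a$ with the same threshold $t$.

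\emph{Every element of $\Pi_a$ is a multiple of $d$.} Let $P\in\Pi_a$, with threshold $t_P$, and let $t_d$ be a threshold for $d$. Set $T=\max(t_P,t_d)$. By the previous paragraph, $a_{n+qd}=a_n$ for all $n\ge T$ and all $q\ge0$. Write $P=qd+\rho$ with $0\le\rho<d$, and suppose $\rho>0$. For $n\ge T$, the index $n+\rho$ is also at least $T$, so
\[
a_{n+\rho}=a_{n+\rho+qd}=a_{n+P}=a_n .
\]
Thus $\rho\in\Pi_a$ with threshold $T$. But $0<\rho<d$, which contradicts the minimality of $d$. Therefore $\rho=0$ and $d\mid P$.

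The only point needing care is to use a single threshold $T$ valid for both $P$ and $d$, and to check that the shifted indices $n+\rho$ and $n+jd$ stay above it. There is no real obstacle beyond this.
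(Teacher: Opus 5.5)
Your proof is correct and follows essentially the same route as the paper. Closure of $\Pi_a$ under positive multiples gives one inclusion; for the other, you write $P=qd+\rho$ and show $\rho\in\Pi_a$, contradicting the minimality of $d$. The paper gets $\rho\in\Pi_a$ from a general closure-under-differences property, whereas you check it directly using the common threshold $T$.
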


\begin{proof}
If $P,Q\in\Pi_a$ hold from indices $t_P,t_Q$ on and $t=\max\{t_P,t_Q\}$, then for $n\ge t$
we have $a_{n+(P+Q)}=a_{(n+Q)+P}=a_{n+Q}=a_n$, so $P+Q\in\Pi_a$; iterating gives
$kP\in\Pi_a$ for all $k\ge1$. If moreover $P>Q$, then $a_{n+(P-Q)}=a_{n+P}=a_n$ for
$n\ge t$, because $n+(P-Q)\ge t_Q$; so $P-Q\in\Pi_a$. Now let $P\in\Pi_a$ and write
$P=kd+\rho$ with $0\le\rho<d$; here $k\ge1$ since $d\le P$. If $\rho>0$ then
$\rho=P-kd\in\Pi_a$ by the two closure properties, contradicting the minimality of $d$.
Hence $d\mid P$, and the converse inclusion is the closure under multiples.
\end{proof}

For $P\in\Pi_a$ put $\mathcal T_{a,P}:=\{t\in\mathbb Z_{\geq0}:\forall n\in\mathbb Z_{\geq0},\ n\ge t\Longrightarrow a_{n+P}=a_n\}$, the set of admissible thresholds for $P$.

\begin{lemma}\label{lem:threshold}
Let $a$ be eventually periodic and $d=d(a)$. Then $\mathcal T_{a,P}=\mathcal T_{a,d}$ for
every $P\in\Pi_a$.
\end{lemma}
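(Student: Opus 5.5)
By Lemma~\ref{lem:pi}, every $P\in\Pi_a$ has the form $P=kd$ with $k\ge1$. We therefore prove the two inclusions $\mathcal T_{a,d}\subseteq\mathcal T_{a,P}$ and $\mathcal T_{a,P}\subseteq\mathcal T_{a,d}$ separately.

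\emph{First inclusion.} Let $t\in\mathcal T_{a,d}$ and $n\ge t$. Every index $n+jd$ with $j\ge0$ is also at least $t$. Telescoping therefore gives
\[
a_{n+kd}=a_{n+(k-1)d}=\cdots=a_{n+d}=a_n .
\]
Hence $t\in\mathcal T_{a,kd}=\mathcal T_{a,P}$.

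\emph{Second inclusion.} This is the direction that needs an idea. Let $t\in\mathcal T_{a,P}$ and fix $n\ge t$; we want $a_{n+d}=a_n$. We know $d\in\Pi_a$, so $d$ works from some threshold $t_d$, possibly much larger than $t$. The plan is to move the comparison beyond $t_d$ using shifts by $P$, which are valid from $t$ onward.

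Choose $m\ge0$ large enough that $n+mP\ge t_d$. Both $n$ and $n+d$ are at least $t$, so iterating the $P$-shift gives
\[
a_n=a_{n+mP}
\qquad\text{and}\qquad
a_{n+d}=a_{n+d+mP}.
\]
Since $n+mP\ge t_d$, we also have $a_{n+mP+d}=a_{n+mP}$. Chaining these three equalities yields $a_{n+d}=a_n$. Because $n\ge t$ was arbitrary, $t\in\mathcal T_{a,d}$, which completes the proof.

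The only real step is recognising that the $P$-periodicity, which holds from $t$ on, lets us transport both $a_n$ and $a_{n+d}$ into the range where $d$-periodicity is already known. The remaining steps are routine index bookkeeping, in particular checking that every index used is at least the relevant threshold.
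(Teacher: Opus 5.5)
Your proposal is correct and follows the paper's proof essentially step for step. For $\mathcal T_{a,d}\subseteq\mathcal T_{a,P}$ you telescope along $n+jd$, and for the reverse inclusion you shift both $a_n$ and $a_{n+d}$ by a multiple of $P$ past a threshold for $d$ and chain the three resulting equalities, which is exactly the paper's argument with its explicit choice $\ell=\max\{0,\lceil (u-n)/P\rceil\}$.
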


\begin{proof}
Write $P=kd$ (Lemma~\ref{lem:pi}). If $t\in\mathcal T_{a,d}$ and $n\ge t$, then
$n+jd\ge t$ for $j\ge0$, so $a_{n+P}=a_{n+kd}=\cdots=a_{n+d}=a_n$; thus
$\mathcal T_{a,d}\subseteq\mathcal T_{a,P}$.

Conversely let $t\in\mathcal T_{a,P}$, and fix some $u\in\mathcal T_{a,d}$, which exists
because $d\in\Pi_a$. Given $n\ge t$, set
$\ell:=\max\{0,\lceil (u-n)/P\rceil\}$, so that $\ell\in\mathbb Z_{\ge0}$ and
$n+\ell P\ge u$. Since $n+jP\ge t$ and $n+d+jP\ge t$ for every $j\ge0$, applying the
period $P$ repeatedly gives $a_{n+\ell P}=a_n$ and
$a_{n+d+\ell P}=a_{n+d}$; and $a_{n+d+\ell P}=a_{n+\ell P}$ because
$n+\ell P\ge u$. Combining the three equalities yields $a_{n+d}=a_n$, so
$t\in\mathcal T_{a,d}$.
\end{proof}

Consequently $\mathcal T_a:=\mathcal T_{a,P}$ does not depend on $P\in\Pi_a$, and we may
define the \emph{preperiod} $s(a):=\min\mathcal T_a$.

\begin{corollary}\label{cor:preperiod}
Let $a$ be eventually periodic, $s=s(a)$, $P\in\Pi_a$ and $t\ge0$. Then
$a_{n+P}=a_n$ for all $n\ge t$ if and only if $s\le t$.
\end{corollary}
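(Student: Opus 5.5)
The corollary is a direct unwinding of the definitions, so the plan is to do exactly that. Fix $P\in\Pi_a$ and $t\ge0$. The condition ``$a_{n+P}=a_n$ for all $n\ge t$'' is, by definition, the statement $t\in\mathcal T_{a,P}$. By Lemma~\ref{lem:threshold} we have $\mathcal T_{a,P}=\mathcal T_a$. Hence the claim reduces to showing
\[
t\in\mathcal T_a \iff s(a)\le t .
\]

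\emph{Forward direction.} If $t\in\mathcal T_a$, then $s=\min\mathcal T_a\le t$, by the definition of the minimum.

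\emph{Reverse direction.} We need $\mathcal T_a$ to be upward closed in $\mathbb Z_{\ge0}$. Suppose $t'\in\mathcal T_{a,P}$ and $t\ge t'$. Every $n\ge t$ also satisfies $n\ge t'$, so $a_{n+P}=a_n$ holds for all such $n$, and therefore $t\in\mathcal T_{a,P}$. Since $s\in\mathcal T_a$ (the set is nonempty, because $P\in\Pi_a$, and so its minimum exists and belongs to it), every $t\ge s$ lies in $\mathcal T_a$.

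There is no real obstacle here. The one substantive ingredient, that the threshold set does not depend on the chosen period, has already been supplied by Lemma~\ref{lem:threshold}. The only point needing a word of justification is that the minimum exists and lies in the set, which follows from nonemptiness together with well-ordering.
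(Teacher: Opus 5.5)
Your proof is correct and follows the paper's own argument. The forward direction uses $s=\min\mathcal T_a$ together with $\mathcal T_{a,P}=\mathcal T_a$ from Lemma~\ref{lem:threshold}, and the converse uses $s\in\mathcal T_a$ plus the fact that an admissible threshold can always be raised, which the paper states in one line and you spell out as upward closure.
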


\begin{proof}
The hypothesis says $t\in\mathcal T_a$, whence $s\le t$; conversely $s\in\mathcal T_a$
gives $a_{n+P}=a_n$ for $n\ge s$, hence for $n\ge t$ whenever $t\ge s$.
\end{proof}

\section{The ring \texorpdfstring{$S_r$}{S\_r}}\label{sec:ring}
In this section, we construct the algebraic background in which we make extensive use of an element \(i\) satisfying \(i^2=-1\), since identities involving \(i\) will appear repeatedly in the next sections. Since our problem concerns the Euler up/down numbers modulo \(p^r\), it is clear that the underlying ring should at least retain the arithmetic of integers modulo \(p^r\). Thus, \(\mathbb Z/p^r\mathbb Z\) is chosen naturally for this construction.

At first, one could choose such a ring separately according to \(p\bmod 4\):
\[
S'_r:=
\begin{cases}
\mathbb Z/p^r\mathbb Z,
& p\equiv1\pmod4,\\[4pt]
(\mathbb Z/p^r\mathbb Z)[x]/(x^2+1),
& p\equiv3\pmod4.
\end{cases}
\]

When \(p\equiv1\pmod4\), the equation \(x^2+1=0\) already has a solution modulo \(p\)~\cite[p.~52]{IrelandRosen}, and such a solution can be lifted modulo \(p^r\) by Hensel's lemma~\cite[p.~89]{Gouvea}. Thus, in this case, an element \(i\) satisfying \(i^2=-1\) can already be chosen inside \(\mathbb Z/p^r\mathbb Z\), so adjoining a new element is unnecessary.

When \(p\equiv3\pmod4\), on the other hand, such an element \(i\) does not exist in the ring \(\mathbb Z/p^r\mathbb Z\); we may therefore obtain such an element simply by passing to \((\mathbb Z/p^r\mathbb Z)[x]/(x^2+1)\) and taking \(i\) to be the class of \(x\), that is, \(i=[x]\).

However, we give up this minimality and work instead in the single ring
\[
S_r:=(\mathbb Z/p^r\mathbb Z)[x]/(x^2+1)
\]
for every odd prime \(p\). The main reason for this choice is not algebraic necessity, but uniformity of the later arguments. Treating the cases \(p\equiv1\pmod4\) and \(p\equiv3\pmod4\) separately throughout the paper would introduce an additional distinction that is not essential to the main ideas and could unnecessarily complicate the exposition. Moreover, working in the ring \(S_r\) avoids this repeated case separation and constructively guarantees the existence of an element \(i\) satisfying \(i^2=-1\). The idea is analogous to the standard construction of the complex numbers \(\mathbb C=\mathbb R[x]/(x^2+1)\).Now, we present the following lemma

\begin{lemma}\label{lem:Sr}
Let $p$ be an odd prime and $r\ge1$, and put $A_r:=\mathbb Z/p^r\mathbb Z$. Let
$\iota_r:A_r\to S_r$ be the natural map. Then:
\begin{enumerate}
\item[\textup{(i)}] every $\xi\in S_r$ can be written uniquely as
$\xi=\iota_r(a)+\iota_r(b)i$ with $a,b\in A_r$; equivalently, if
$\iota_r(a)+\iota_r(b)i=\iota_r(c)+\iota_r(d)i$, then $a=c$ and $b=d$;
\item[\textup{(ii)}] $\iota_r$ is an injective unital ring homomorphism carrying units
to units; in particular, if $m\in\Z$ and $p\nmid m$, then $[m]_{p^r}$ maps to a unit
of $S_r$;
\item[\textup{(iii)}] $i^2=-1$, $i^{-1}=-i$ and $\ord(i)=4$;
\item[\textup{(iv)}] $2$, $1\pm i$ and $-1\pm i$ are units of $S_r$;
\item[\textup{(v)}] for $\xi\in S_r$ and $0\le k\le r-1$, one has
$p^{r-k}\xi=0$ if and only if $\xi\in p^kS_r$;
\item[\textup{(vi)}] $p^nS_r=\{0\}$ for every $n\ge r$.
\end{enumerate}
\end{lemma}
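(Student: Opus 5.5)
The plan is to derive everything from the division algorithm in $A_r[x]$ by the monic polynomial $x^2+1$. Since $x^2+1$ is monic, every $f\in A_r[x]$ can be written uniquely as $f=(x^2+1)g+(a+bx)$ with $a,b\in A_r$. This gives existence in (i) at once. For uniqueness in (i), suppose $\iota_r(a-c)+\iota_r(b-d)i=0$. Then $(a-c)+(b-d)x$ is a multiple $(x^2+1)g$. Because the leading coefficient $1$ is a unit, degrees add, so $\deg\big((x^2+1)g\big)=2+\deg g$ whenever $g\ne0$. A polynomial of degree at most $1$ can therefore only be such a multiple if $g=0$, which forces $a=c$ and $b=d$.

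For (ii), $\iota_r$ is the composite $A_r\to A_r[x]\to S_r$, so it is a unital ring homomorphism. It is injective by (i) with $b=d=0$. Any ring homomorphism carries units to units. If $p\nmid m$, then $[m]_{p^r}$ is a unit of $A_r$ because $\gcd(m,p^r)=1$. For (iii), $i^2=[x^2]=-1$, and then $i\cdot(-i)=1$. We also get $i^4=1$, while $i^2=-1\neq1$ by (i) and (ii), using $p$ odd so that $2\not\equiv0 \pmod{p^r}$. Hence $i\ne\pm1$ and $\ord(i)=4$.

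For (iv), $2$ is a unit by (ii) since $p$ is odd. The element $1+i$ is a unit because $(1+i)(1-i)=2$, so $(1+i)^{-1}=2^{-1}(1-i)$, and the same identity handles $1-i$. Finally, $-1\pm i=-(1\mp i)$, which covers the remaining two elements.

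For (v) and (vi), work coordinatewise via (i): $\xi=a+bi$ and $p^{j}\xi=p^ja+p^jbi$, and by uniqueness this vanishes if and only if $p^ja=p^jb=0$ in $A_r$. So everything reduces to the scalar statement in $A_r=\Z/p^r\Z$: for $0\le k\le r-1$, $p^{r-k}a=0$ if and only if $a\in p^kA_r$. This holds because $p^r\mid p^{r-k}\tilde a$ is equivalent to $p^k\mid\tilde a$ for an integer lift $\tilde a$. We also need $p^kS_r=\{p^ka+p^kbi\}$, which is immediate. Part (vi) follows because $p^n\equiv0$ in $A_r$ for $n\ge r$.

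There is no real obstacle in this argument. The only point needing care is the uniqueness in (i), which relies on monic division over a non-domain. That is why I argue with degrees of products by a monic polynomial rather than invoking field-style reasoning.
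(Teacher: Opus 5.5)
Your proposal is correct and follows essentially the same route as the paper: unique degree-$<2$ representatives modulo the monic $x^2+1$ for (i), injectivity of $\iota_r$ together with $2\ne0$ in $A_r$ for (ii)--(iii), the identity $(1+i)(1-i)=2$ for (iv), and a coordinatewise reduction to $\Z/p^r\Z$ for (v)--(vi). The only difference is that you prove the uniqueness in (i) directly, using additivity of degrees under multiplication by a monic polynomial, whereas the paper cites a textbook for it.
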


\begin{proof}
Since $x^2+1$ is monic of degree $2$, every class in $A_r[x]/(x^2+1)$ has a unique
representative of degree $<2$ \cite[pp.~147--150]{Aluffi}, which gives \textup{(i)}.
For \textup{(ii)}, $\iota_r$ is a unital ring homomorphism, and if $\iota_r(a)=0$ then
$a=0$ by the uniqueness in \textup{(i)}, so $\iota_r$ is injective. Unital
homomorphisms preserve units, and $[m]_{p^r}\in A_r^\times$ whenever $p\nmid m$, so
its image under $\iota_r$ is a unit of $S_r$.

We identify $A_r$ with its image in $S_r$. Thus,
 the representation in \textup{(i)} is written simply as $\xi=a+bi$, by convention.

For \textup{(iii)}, $i^2=[x^2]=-1$ by construction, so $i(-i)=1$ and $i^4=1$.
If $1=-1$ in $S_r$, then by the injectivity of $\iota_r$ we would have $1=-1$ in
$A_r$, which is impossible. Hence $i^2=-1\ne1$. Moreover, $i=1$ would force
$i^2=1$, while $i^3=1$ would give $i^4=i$, hence $i=1$, again a contradiction.
Therefore $\ord(i)=4$.

For \textup{(iv)}, since $p\nmid2$, we have $2\in S_r^\times$ by \textup{(ii)}, and
$(1+i)(1-i)=2$ and $(-1+i)(-1-i)=2$, so $1\pm i$ and $-1\pm i$ are units of $S_r$.

For \textup{(v)}, write $\xi=a+bi$ with $a=[u]_{p^r}$ and $b=[v]_{p^r}$. By
\textup{(i)}, $p^{r-k}\xi=0$ is equivalent to $p^{r-k}a=p^{r-k}b=0$ in $A_r$,
i.e.\ to $p^r\mid p^{r-k}u$ and $p^r\mid p^{r-k}v$, hence to $p^k\mid u$ and
$p^k\mid v$. Thus $u=p^kc$ and $v=p^kd$ for some $c,d\in\mathbb Z$, so
$a=p^k[c]_{p^r}$ and $b=p^k[d]_{p^r}$, whence
$\xi=p^k([c]_{p^r}+[d]_{p^r}i)\in p^kS_r$. Conversely, if $\xi\in p^kS_r$, write $\xi=p^k\eta$ for some $\eta\in S_r$. Then
$p^{r-k}\xi=p^r\eta=0$, since $p^r=0$ in $S_r$. For
\textup{(vi)}, if $n\ge r$, then $p^n=p^{n-r}p^r=0$ in $S_r$, and hence
$p^nS_r=\{0\}$.
\end{proof}
\begin{corollary}\label{cor:units}
Let $a,b\in\Z$ with $p\nmid(a-b)$. Then $a-b$ and $(a-b)i$ are units of $S_r$; this
applies in particular to distinct $a,b\in\{1,\dots,p-1\}$. Moreover the image of a unit
of $S_r$ in $S_r/pS_r$ is a unit, so for distinct $a,b\in\{1,\dots,p-1\}$ the element
$\overline{(a-b)i}$ is a unit of $S_r/pS_r$.
\end{corollary}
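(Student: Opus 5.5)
The argument is short and uses Lemma~\ref{lem:Sr} directly. Since $p\nmid(a-b)$, part~\textup{(ii)} of Lemma~\ref{lem:Sr} shows that $[a-b]_{p^r}$ is a unit of $A_r$ and that its image $a-b$ is a unit of $S_r$. By part~\textup{(iii)}, $i$ is a unit with inverse $-i$. The product of two units is a unit, so $(a-b)i$ is a unit of $S_r$, with explicit inverse $-(a-b)^{-1}i$.

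For the particular case, take distinct $a,b\in\{1,\dots,p-1\}$. Then $0<|a-b|\le p-2<p$, so $a-b$ is a nonzero integer of absolute value less than $p$, and therefore $p\nmid(a-b)$. The first part then applies.

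For the statement about the quotient, I use that the projection $\pi\colon S_r\to S_r/pS_r$, $\xi\mapsto\bar\xi$, is a unital ring homomorphism. If $\xi\eta=1$ in $S_r$, then $\bar\xi\,\bar\eta=\overline{\xi\eta}=\bar 1$, and $\bar 1$ is the identity of the quotient. So $\bar\xi$ is a unit. This holds even if the quotient were the zero ring, although it is not: $1\notin pS_r$ by the coordinate uniqueness in part~\textup{(i)}, since $1=p\eta$ would force $p\mid 1$ in the first coordinate modulo $p^r$. Applying this to $\xi=(a-b)i$ gives that $\overline{(a-b)i}$ is a unit of $S_r/pS_r$.

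There is no real obstacle here. The only point needing care is to be explicit that unit-preservation uses a unital homomorphism, namely $\iota_r$ in the first part and $\pi$ in the last.
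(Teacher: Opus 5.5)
Your proof is correct and follows the paper's argument essentially verbatim: Lemma~\ref{lem:Sr}(ii),(iii) make $a-b$ and $i$ units of $S_r$, the bound $0<|a-b|<p$ handles distinct $a,b\in\{1,\dots,p-1\}$, and the quotient homomorphism preserves units. Your extra remark that $S_r/pS_r$ is not the zero ring is correct but is not needed for the statement.
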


\begin{proof}
If $p\nmid(a-b)$ then $[a-b]_{p^r}\in A_r^\times$, so $a-b\in S_r^\times$ by
Lemma~\ref{lem:Sr}(ii), and $i\in S_r^\times$ by (iii) above; hence
$(a-b)i\in S_r^\times$. For distinct $a,b\in\{1,\dots,p-1\}$ we have
$-p<a-b<p$ and $a-b\ne0$, hence $p\nmid(a-b)$. Finally,
$\overline{(a-b)i}$ is a unit of $S_r/pS_r$, since quotient homomorphisms preserve units.
\end{proof}

\section{Hurwitz series and the frequency expansion}\label{sec:hurwitz}

\subsection*{The Hurwitz ring}

Let $A$ be a commutative ring with identity. A \emph{Hurwitz series} over $A$ is a
function $a:\Z_{\ge0}\to A$, written as a sequence $(a_0,a_1,\dots)$ or formally as
$\sum_{n\ge0}a_nz^n/n!$, where the symbols $z^n/n!$ are formal and no division by $n!$
takes place in $A$. The set $A\langle z\rangle$ of all such series is a commutative ring
under coefficientwise addition and the Hurwitz product \cite[p.~1846]{Keigher}
\[
(a*b)_n:=\sum_{k=0}^{n}\binom{n}{k}a_kb_{n-k},
\]
the binomial coefficients being read in $A$ via $\Z\to A$; its identity is
$1_{A\langle z\rangle}=(1_A,0_A,0_A,\dots)$. We write $A[[z]]$ for the ordinary formal
power series ring, with coefficientwise addition and Cauchy product
$(ab)_n=\sum_{k=0}^{n}a_kb_{n-k}$.
\begin{lemma}\label{lem:hurwitz-basic}
Let $A$ and $B$ be commutative rings with identity. Then:
\begin{enumerate}
\item[\textup{(i)}] if $\varphi:A\to B$ is a unital ring homomorphism, then applying
$\varphi$ coefficientwise defines unital ring homomorphisms
$\varphi_{[z]}:A[z]\to B[z]$ and
$\varphi_{\langle z\rangle}:A\langle z\rangle\to B\langle z\rangle$;
\item[\textup{(ii)}] $f\in A\langle z\rangle$ is a unit if and only if
$f_0\in A^\times$;
\item[\textup{(iii)}] $f\in A[[z]]$ is a unit if and only if $f_0\in A^\times$;
\item[\textup{(iv)}] the map
$\psi_A:A[[z]]\to A\langle z\rangle$, defined by
$\psi_A(h)(n):=n!_A\,h(n)$, is a unital ring homomorphism.
\end{enumerate}
\end{lemma}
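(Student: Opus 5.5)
I will verify the four parts in order, since each reduces to a short direct computation with the defining formulas.

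\textbf{Part (i).} Applying a unital homomorphism $\varphi$ coefficientwise respects addition termwise. For products, $\varphi$ commutes with finite sums and products, and it fixes the integers' images, so $\varphi(\binom nk 1_A)=\binom nk 1_B$. Hence
\[
\varphi\Bigl(\sum_k\binom nk a_kb_{n-k}\Bigr)=\sum_k\binom nk\varphi(a_k)\varphi(b_{n-k}),
\]
and similarly for the Cauchy product on $A[z]$. The identity $(1,0,0,\dots)$ maps to $(1,0,0,\dots)$, so both induced maps are unital.

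\textbf{Parts (iii) and (ii).} For (iii), the standard argument applies. If $fg=1$, then $f_0g_0=1$. Conversely, if $f_0\in A^\times$, define $g_0=f_0^{-1}$ and recursively
\[
g_n=-f_0^{-1}\sum_{k=1}^n f_kg_{n-k},
\]
which solves $(fg)_n=0$ for $n\ge1$. For (ii) I will use the same recursion with binomial weights:
\[
g_n=-f_0^{-1}\sum_{k=1}^n\binom nk f_kg_{n-k}.
\]
The $k=0$ term of $(f*g)_n$ is $\binom n0 f_0g_n=f_0g_n$, so the recursion solves $(f*g)_n=0$ for $n\ge1$. Necessity follows because $(f*g)_0=f_0g_0$. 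Commutativity of $*$ is immediate from $\binom nk=\binom n{n-k}$, so a one-sided inverse is two-sided. I will take the ring axioms for $A\langle z\rangle$ as given by the cited source.

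\textbf{Part (iv).} This is the step needing an actual identity, though it is short. Write $n!_A$ for the image of $n!$ in $A$. Additivity is clear. For multiplicativity, compute
\[
\psi(hk)(n)=n!_A\sum_j h_jk_{n-j}.
\]
On the other side,
\[
(\psi h*\psi k)(n)=\sum_j\binom nj\, j!_A\,(n-j)!_A\,h_jk_{n-j}.
\]
The integer identity $\binom nj\,j!\,(n-j)!=n!$ holds in $\Z$, and mapping it through $\Z\to A$ makes the two sums agree term by term. Unitality holds because $\psi(1)=(0!_A\cdot1,0,\dots)=1$. The only care point is to perform the factorial identity in $\Z$ before mapping to $A$, since no division is available in $A$. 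There is no real obstacle beyond this.
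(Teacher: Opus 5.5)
Your proof is correct and follows the paper's argument essentially line by line. For (i) you check that the sum and the (binomial-weighted) product formulas are preserved coefficientwise; for (ii) and (iii) you build the inverse recursively, with and without binomial weights; and for (iv) you use the integer identity $n!=\binom nk k!(n-k)!$. Your extra remarks, that the Hurwitz product is commutative and that the factorial identity should be applied in $\Z$ before mapping to $A$, are harmless refinements the paper leaves implicit.
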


\begin{proof}
For \textup{(i)}, in both cases the $n$-th coefficient of the sum is $a_n+b_n$. The $n$-th coefficient of the product is $\sum_{k=0}^{n}a_kb_{n-k}$ in the polynomial case and $\sum_{k=0}^{n}\binom{n}{k}a_kb_{n-k}$ in the Hurwitz case; all these expressions are preserved by $\varphi$. For \textup{(ii)}, if $f*g=1$, then $f_0g_0=1$, so $f_0\in A^\times$; conversely, if $f_0\in A^\times$, the inverse is determined recursively by $g_0=f_0^{-1}$ and $g_n=-f_0^{-1}\sum_{k=1}^{n}\binom{n}{k}f_kg_{n-k}$ for $n\ge1$.
The proof of \textup{(iii)} is the same with the Cauchy product, giving
$g_n=-f_0^{-1}\sum_{k=1}^{n}f_kg_{n-k}$. For \textup{(iv)}, additivity and unitality
are immediate, while multiplicativity follows from
$n!=\binom{n}{k}k!(n-k)!$, which converts the Cauchy product into the Hurwitz product.
\end{proof}

For $\alpha\in A$ define the \emph{formal exponential}
\[
\varepsilon(\alpha):=(1_A,\alpha,\alpha^2,\alpha^3,\dots)
=\sum_{n\ge0}\alpha^n\frac{z^n}{n!}\in A\langle z\rangle.
\]
The binomial theorem applied to the Hurwitz product gives
$\varepsilon(\alpha)\varepsilon(\beta)=\varepsilon(\alpha+\beta)$; hence
$\varepsilon(\alpha)^k=\varepsilon(k\alpha)$ for $k\ge0$, and
$\varepsilon(\alpha)\in A\langle z\rangle^\times$ with
$\varepsilon(\alpha)^{-1}=\varepsilon(-\alpha)$, since
$\varepsilon(0_A)=1_{A\langle z\rangle}$.
\subsection*{The Euler series over \texorpdfstring{$S_r$}{S\_r}}

In $\Q[[z]]$ put $\cos z:=\sum_{m\ge0}(-1)^mz^{2m}/(2m)!$ and
$\sin z:=\sum_{m\ge0}(-1)^mz^{2m+1}/(2m+1)!$. The constant coefficient of $\cos z$ is
$1$, so $\cos z$ is a unit by Lemma~\ref{lem:hurwitz-basic}\textup{(iii)}, and we may set
$\sec z:=(\cos z)^{-1}$ and $\tan z:=(\sin z)(\cos z)^{-1}$. Writing
$\epsilon:=\sum_{n\ge0}E_nz^n/n!$, Andr\'e's identity \eqref{eq:andre} reads
$\epsilon=\sec z+\tan z$, that is,
\[
\epsilon\cos z=1+\sin z\qquad\text{in }\Q[[z]].
\]
Applying $\psi_\Q$ from Lemma~\ref{lem:hurwitz-basic}\textup{(iv)} transports this equation to
$\Q\langle z\rangle$, where the Hurwitz coefficients are
\[
\cos z=(1,0,-1,0,1,0,-1,\dots),
\quad
\sin z=(0,1,0,-1,0,1,0,\dots),
\quad
\epsilon=(E_0,E_1,E_2,\dots).
\]
All coefficients are integers, so the same identity holds in
$\Z\langle z\rangle$. Let $\varphi:\Z\to S_r$ be the canonical homomorphism. By
Lemma~\ref{lem:hurwitz-basic}\textup{(i)}, applying $\varphi$ coefficientwise gives the
unital ring homomorphism $ \varphi_{\langle z\rangle}:\Z\langle z\rangle\to S_r\langle z\rangle=\Hur.$

Writing
\[
F:=\varphi_{\langle z\rangle}(\epsilon)
=\bigl((E_n)_{S_r}\bigr)_{n\ge0}\in\Hur,
\]
and using the same symbols $\cos z$ and $\sin z$ for their images under
$\varphi_{\langle z\rangle}$, we obtain
\begin{equation}\label{eq:Fcos}
F\cos z=1+\sin z\qquad\text{in }\Hur.
\end{equation}
We identify each $\xi\in S_r$ with the constant series $(\xi,0,0,\dots)\in\Hur$.
\begin{lemma}\label{lem:u}
Let $N:=p^r$ and $u:=\varepsilon(i)\in\Hur$. Then:
\begin{enumerate}
\item[\textup{(i)}] $u\in\Hur^\times$ and $u^{-1}=\varepsilon(-i)$;
\item[\textup{(ii)}] $u^k=\varepsilon(ki)$ and $u^k(n)=(ki)^n$ for all $k,n\ge0$;
\item[\textup{(iii)}] $u^N=1_\Hur$;
\item[\textup{(iv)}]
\[
\cos z=\frac{u+u^{-1}}{2},
\qquad
\sin z=\frac{u-u^{-1}}{2i}.
\]
\end{enumerate}
\end{lemma}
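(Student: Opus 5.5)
Each of the four parts follows directly from the identity $\varepsilon(\alpha)\varepsilon(\beta)=\varepsilon(\alpha+\beta)$ established above, together with Lemma~\ref{lem:Sr}. I take them in order.

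For (i), recall that $\varepsilon(\alpha)^{-1}=\varepsilon(-\alpha)$ for any $\alpha\in A$. Taking $A=S_r$ and $\alpha=i$ gives $u\in\Hur^\times$ with $u^{-1}=\varepsilon(-i)$. Alternatively, Lemma~\ref{lem:hurwitz-basic}(ii) applies because $u_0=1$.

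For (ii), I induct on $k$, using $\varepsilon(\alpha)^k=\varepsilon(k\alpha)$. The base case is $u^0=1_\Hur=\varepsilon(0)$, which uses the convention $0^0=1$. The inductive step is $u^{k+1}=\varepsilon(ki)\varepsilon(i)=\varepsilon((k+1)i)$. Reading off the $n$-th coefficient of $\varepsilon(ki)$ gives $u^k(n)=(ki)^n$.

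For (iii), I apply (ii) with $k=N$. This gives $u^N=\varepsilon(Ni)$. Since $N=p^r=0$ in $S_r$ by Lemma~\ref{lem:Sr}(vi), we have $Ni=0$, so $u^N=\varepsilon(0)=1_\Hur$.

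For (iv), I compare coefficients. Here $2$ and $2i$ are units of $S_r$ by Lemma~\ref{lem:Sr}(iii),(iv), so division by them means multiplication by the corresponding constant series in $\Hur$. The $n$-th coefficient of $u+u^{-1}$ is $i^n+(-i)^n=i^n\bigl(1+(-1)^n\bigr)$.
\begin{itemize}
\item For odd $n$ this is $0$.
\item For $n=2m$ it equals $2(-1)^m$.
\end{itemize}
Multiplying by the constant $2^{-1}$ therefore yields exactly $(1,0,-1,0,\dots)$, the image of $\cos z$. Note that multiplying a series by a constant $(\xi,0,0,\dots)$ in the Hurwitz product just scales each coefficient, since only the $k=0$ term of the sum survives.

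Similarly, the $n$-th coefficient of $u-u^{-1}$ is $i^n\bigl(1-(-1)^n\bigr)$.
\begin{itemize}
\item For even $n$ this is $0$.
\item For $n=2m+1$ it equals $2i\cdot i^{2m}=2i(-1)^m$.
\end{itemize}
Dividing by $2i$ gives $(0,1,0,-1,\dots)$, which is $\sin z$.

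None of these steps presents a real obstacle. The only care needed is to treat constants as elements of $\Hur$ via the constant-series identification, and to verify that this identification is compatible with the Hurwitz product.
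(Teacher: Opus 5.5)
Your proposal is correct and follows essentially the same route as the paper. Parts (i)--(iii) come from $\varepsilon(\alpha)\varepsilon(\beta)=\varepsilon(\alpha+\beta)$ together with $p^r=0$ in $S_r$. Part (iv) is the same coefficientwise computation of $i^n\pm(-i)^n$, followed by division by the units $2$ and $2i$; your remarks on the constant-series identification and the convention $0^0=1$ spell out details the paper leaves implicit.
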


\begin{proof}
Parts \textup{(i)} and \textup{(ii)} follow from the properties of $\varepsilon$ recorded above. For
\textup{(iii)}, since $N=p^r$, Lemma~\ref{lem:Sr}\textup{(vi)} gives $N=0$ in $S_r$,
so $Ni=0$ and
$u^N=\varepsilon(Ni)=\varepsilon(0)=1_\Hur$.

For \textup{(iv)}, coefficientwise
$(u\pm u^{-1})(n)=i^n\pm(-i)^n$, which equals $2(-1)^m$ and $0$, respectively, for
$n=2m$, and $0$ and $2i(-1)^m$, respectively, for $n=2m+1$. Hence
$u+u^{-1}=2\cos z$ and $u-u^{-1}=2i\sin z$. By
Lemma~\ref{lem:Sr}\textup{(iii),(iv)}, $2,2i\in S_r^\times$, and therefore by
Lemma~\ref{lem:hurwitz-basic}\textup{(ii)} they are units of $\Hur$, giving the stated
identities.
\end{proof}

Substituting the identities for sine and cosine in Lemma~\ref{lem:u}\textup{(iv)} into \eqref{eq:Fcos} and multiplying by
$2iu$ gives
$Fi(u^2+1)=u^2+2iu-1$. Since
$u^2+2iu-1=(u+i)^2$ and $i(u^2+1)=(u+i)(1+iu)$, we obtain
\[
F(u+i)(1+iu)=(u+i)^2.
\]
Both $u+i$ and $1+iu$ have constant coefficient $1+i\in S_r^\times$ by
Lemma~\ref{lem:Sr}\textup{(iv)}, hence are units of $\Hur$ by
Lemma~\ref{lem:hurwitz-basic}\textup{(ii)}. Cancelling $u+i$ and inverting $1+iu$
therefore yields
\begin{equation}\label{eq:Fclosed}
F=(u+i)(1+iu)^{-1}.
\end{equation}

\begin{remark}\label{rem:fourier}
The identities in Lemma~\ref{lem:u}\textup{(iv)} for $\cos z$ and $\sin z$ are formally analogous to the classical formulas $\cos z=(e^{iz}+e^{-iz})/2$ and $\sin z=(e^{iz}-e^{-iz})/(2i)$ in complex analysis. Furthermore, since the powers $u^k$ play the formal role of the exponential modes $e^{ikz}$,the motivation for calling the next proposition \emph{Frequency expansion} comes from this analogy, with the terminology being purely algebraic.
\end{remark}
\begin{proposition}[Frequency expansion]\label{prop:expansion}
Let $p$ be an odd prime, $r\ge1$, $N:=p^r$, $u:=\varepsilon(i)\in\Hur$, and put
$\eta:=i(i^{-N}-1)$. Then:
\begin{enumerate}
\item[\textup{(i)}] $\eta=1-i$ if $N\equiv1\pmod4$ and $\eta=-1-i$ if $N\equiv3\pmod4$;
in particular $\eta\in S_r^\times$;
\item[\textup{(ii)}] setting $H_0:=(1+i^{-N})\eta^{-1}$ and $H_k:=2i^{-k}\eta^{-1}$ for
$1\le k\le N-1$, one has $H_0=1$ if $N\equiv1\pmod4$, $H_0=-1$ if $N\equiv3\pmod4$, and
$H_k\in S_r^\times$ for all $0\le k\le N-1$;
\item[\textup{(iii)}] $\displaystyle F=\sum_{k=0}^{N-1}H_ku^k$ in $\Hur$;
\item[\textup{(iv)}] $\displaystyle F_n=\sum_{\substack{1\le k\le N-1\\ p\nmid k}}H_k(ki)^n$
for every $n\ge r$.
\end{enumerate}
\end{proposition}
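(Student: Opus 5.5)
The plan is to compute everything explicitly from the closed form \eqref{eq:Fclosed}, $F=(u+i)(1+iu)^{-1}$, and the relation $u^N=1_\Hur$ from Lemma~\ref{lem:u}\textup{(iii)}, and to invert $1+iu$ by a finite geometric series.

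For \textup{(i)}, I will note that $N$ is odd. Since $\ord(i)=4$, $i^{-N}$ equals $i^{-1}=-i$ if $N\equiv1\pmod4$ and $i$ if $N\equiv3\pmod4$. This gives $\eta=i(-i-1)=1-i$ or $\eta=i(i-1)=-1-i$, and both are units by Lemma~\ref{lem:Sr}\textup{(iv)}. For \textup{(ii)}, substituting the two values of $i^{-N}$ into $1+i^{-N}$ and dividing by $\eta$ gives $H_0=(1-i)/(1-i)=1$ and $H_0=(1+i)/(-1-i)=-1$ respectively. Each $H_k$ with $k\ge1$ is a product of the units $2$, $i^{-k}$ and $\eta^{-1}$.

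For \textup{(iii)}, the key identity is the telescoping product
\[
(1+iu)\sum_{k=0}^{N-1}(-iu)^k=1-(-iu)^N=1-(-i)^N u^N=1+i^N,
\]
which uses $u^N=1_\Hur$ and the fact that $N$ is odd. I will check that $1+i^N=1-i^{-N}$, using $i^{2N}=-1$, and that this equals $-\eta/i$, which is a unit. It follows that
\[
(1+iu)^{-1}=(1+i^N)^{-1}\sum_{k=0}^{N-1}(-i)^k u^k .
\]
Next I multiply by $u+i$ and reduce $u^N$ to $1$. For $1\le k\le N-1$, the coefficient of $u^k$ is
\[
\bigl((-i)^{k-1}+i(-i)^k\bigr)(1+i^N)^{-1}=2(-i)^{k-1}(1+i^N)^{-1}.
\]
The constant coefficient is $\bigl(i+(-i)^{N-1}\bigr)(1+i^N)^{-1}$. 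The remaining work is to match these with the stated $H_k$. We have $2i^{-k}\eta^{-1}=2(-i)^{k}/\bigl(i(i^{-N}-1)\bigr)=2(-i)^{k-1}/(1-i^{-N})$, which agrees with the coefficient of $u^k$ above. The constant term is handled by the same kind of case check on $N\bmod4$, comparing it against $H_0=\pm1$.

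For \textup{(iv)}, I take $n$-th coefficients using $u^k(n)=(ki)^n$ from Lemma~\ref{lem:u}\textup{(ii)}. Since $H_0$ is a constant series, the $k=0$ term contributes only at $n=0$, so it vanishes for $n\ge r\ge1$. If $p\mid k$, then $(ki)^n=p^n(\cdot)$, which is $0$ for $n\ge r$ by Lemma~\ref{lem:Sr}\textup{(vi)}. Only the indices with $p\nmid k$ survive, which is the stated formula.

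The only real obstacle is bookkeeping with powers of $i$, namely the sign identities $(-i)^N=-i^N$ and $i^{-N}=-i^N$, and matching the constant term to $H_0$. I will handle these by splitting into the cases $N\equiv1$ and $N\equiv3\pmod4$.
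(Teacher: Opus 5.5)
Your proof is correct. Parts (i), (ii) and (iv) coincide with the paper's argument; the paper absorbs the $k=0$ term into the case $p\mid k$ via $(0\cdot i)^n=0$ for $n\ge1$, which is the same observation as your ``constant series'' remark.

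The difference is in (iii), where the paper verifies rather than derives. It defines $G:=\sum_{k=0}^{N-1}H_ku^k$, multiplies by $1+iu$ and uses $u^N=1_{\Hur}$ to collect terms. It then checks $H_k+iH_{k-1}=0$ for $2\le k\le N-1$, $H_1+iH_0=1$ and $H_0+iH_{N-1}=i$. The last of these is done uniformly via $i\eta=1-i^{-N}$, with no case split on $N\bmod 4$. Cancelling the unit $1+iu$ then gives $G=F$.

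Your telescoping identity $(1+iu)\sum_{k=0}^{N-1}(-iu)^k=1+i^N$ instead produces the coefficients from scratch. This is exactly the derivation the paper only sketches in the remark following the proposition. Your route explains where the $H_k$ come from and exhibits $(1+iu)^{-1}$ explicitly as a polynomial in $u$, so invertibility of $1+iu$ comes for free. The paper's route is a single multiplication once the $H_k$ are known. Both rest on the same two inputs: $u^N=1_{\Hur}$ and the closed form $F=(u+i)(1+iu)^{-1}$.
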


\begin{proof}
$N=p^r$ is odd. If $N\equiv1\pmod4$ then $i^{-N}=i^{-1}=-i$ and $\eta=i(-i-1)=1-i$; if
$N\equiv3\pmod4$ then $i^{-N}=i$ and $\eta=i(i-1)=-1-i$. Both are units by
Lemma~\ref{lem:Sr}(iv), giving (i). The two cases give
$H_0=(1-i)/(1-i)=1$ and $H_0=(1+i)/(-1-i)=-1$ respectively, and for $1\le k\le N-1$ the
element $H_k=2i^{-k}\eta^{-1}$ is a product of units by Lemma~\ref{lem:Sr}(iii),(iv);
this is (ii).

Put $G:=\sum_{k=0}^{N-1}H_ku^k$. Using $u^N=1$ by Lemma~\ref{lem:u}\textup{(iii)},
\[
(1+iu)G
=\bigl(H_0+iH_{N-1}\bigr)+\bigl(H_1+iH_0\bigr)u
+\sum_{k=2}^{N-1}\bigl(H_k+iH_{k-1}\bigr)u^k .
\]
For $2\le k\le N-1$ we have $i^{2-k}=-i^{-k}$, so
$H_k+iH_{k-1}=\tfrac2\eta(i^{-k}+i^{2-k})=0$. Next, since
$\eta=i^{1-N}-i$,
\[
H_1+iH_0=\frac{2i^{-1}+i(1+i^{-N})}{\eta}=\frac{i^{1-N}-i}{\eta}=1,
\]
and, using $i^{2-N}=-i^{-N}$ together with $i\eta=i^2(i^{-N}-1)=1-i^{-N}$,
\[
H_0+iH_{N-1}=\frac{1+i^{-N}+2i^{2-N}}{\eta}=\frac{1-i^{-N}}{\eta}=i .
\]
Hence $(1+iu)G=u+i$ ; since $1+iu\in\Hur^\times$ , we obtain
$G=(u+i)(1+iu)^{-1}=F$ by \eqref{eq:Fclosed}, which proves \textup{(iii)}.

Taking $n$-th coefficients and using $u^k(n)=(ki)^n$ by  Lemma~\ref{lem:u}\textup{(ii)} gives
$F_n=\sum_{k=0}^{N-1}H_k(ki)^n$. If
$n\ge r$ and $p\mid k$, say $k=p\ell$, then $(ki)^n=p^n(\ell i)^n=0$ by
Lemma~\ref{lem:Sr}(vi), leaving (iv).
\end{proof}

\begin{remark}\label{rem:transfer}
By Lemma~\ref{lem:Sr}(ii) the map $\iota_r:A_r\to S_r$ is injective, and
$F_n=\iota_r([E_n]_{p^r})$; hence $F_m=F_n$ if and only if $E_m\equiv E_n\pmod{p^r}$. The
sequence $(F_n)_{n\ge0}$ therefore has exactly the same eventual periods, minimal
eventual period and preperiod as $(E_n\bmod p^r)_{n\ge0}$, meaning that we can work with $F_n$ for our analysis.
\end{remark}

\begin{remark}
The coefficients $H_k$ arise naturally from \eqref{eq:Fclosed}. Since
$u+i=i\,u^0+u$ is already a finite $S_r$-linear combination of powers of
$u$, it remains to express $(1+iu)^{-1}$ in the same form. Expanding
$(1+iu)^{-1}$ in this way, multiplying by $u+i$, and grouping the resulting
terms according to the powers of $u$ yields 
$H_0,\ldots,H_{N-1}$.
\end{remark}
\section{The period conjecture}\label{sec:period}

\begin{lemma}\label{lem:vdm}
Let $A$ be a commutative ring with identity, $t\ge1$, and let
$\lambda_1,\dots,\lambda_t\in A^\times$ satisfy $\lambda_k-\lambda_j\in A^\times$ for all
$1\le j<k\le t$. If $c_1,\dots,c_t\in A$ satisfy
$\sum_{j=1}^{t}c_j\lambda_j^{\,n}=0$ for $t$ consecutive values
$n=n_0,\dots,n_0+t-1$, then $c_1=\cdots=c_t=0$.
\end{lemma}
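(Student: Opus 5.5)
The approach is a Vandermonde argument carried out over an arbitrary commutative ring, where we cannot use fields or determinants over a domain directly. First we reduce to $n_0=0$. Put $d_j:=c_j\lambda_j^{\,n_0}$. Since $\lambda_j\in A^\times$, we have $c_j=0$ if and only if $d_j=0$. The hypothesis becomes $\sum_{j=1}^t d_j\lambda_j^{\,m}=0$ for $m=0,\dots,t-1$, that is, $V\mathbf d=0$, where $V=(\lambda_j^{\,m})_{0\le m\le t-1,\,1\le j\le t}$ is the Vandermonde matrix over $A$ and $\mathbf d=(d_1,\dots,d_t)^{\mathsf T}$.

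Rather than invoking the determinant formula, I would argue by induction on $t$, which keeps everything elementary. The case $t=1$ reads $d_1=0$ directly. For $t\ge2$, the hypothesis gives for each $m=0,\dots,t-2$ the two relations at exponents $m$ and $m+1$. Multiplying the first by $\lambda_t$ and subtracting the second eliminates the $t$-th term:
\[
\sum_{j=1}^{t-1} d_j(\lambda_t-\lambda_j)\lambda_j^{\,m}=0\qquad(0\le m\le t-2).
\]
Now set $d_j':=d_j(\lambda_t-\lambda_j)$. The elements $\lambda_1,\dots,\lambda_{t-1}$ still satisfy the hypotheses, and we have $t-1$ consecutive vanishing exponents. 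By induction all $d_j'=0$. Since $\lambda_t-\lambda_j\in A^\times$ by assumption (up to sign, given the ordering $j<t$), it follows that $d_j=0$ for $j<t$. Returning to the $m=0$ relation then gives $d_t=0$.

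Alternatively, one can multiply $V\mathbf d=0$ by the adjugate to get $\det(V)\,\mathbf d=0$. Here $\det V=\prod_{j<k}(\lambda_k-\lambda_j)$ is a unit, so $\mathbf d=0$. Note that this second route does not even use invertibility of the $\lambda_j$ beyond the shift to $n_0=0$.

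There is no real obstacle. The only points needing care are that cancellation must use the unit hypotheses rather than any absence of zero divisors, since $A$ (for example $S_r/pS_r$) need not be a domain, and that the shift by $n_0$ legitimately uses $\lambda_j\in A^\times$.
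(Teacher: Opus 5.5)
Your proof is correct, and your main argument takes a different route from the paper's. The paper argues in one step. It writes the hypotheses as $VD\,(c_1,\dots,c_t)^{\mathsf T}=0$, with $V$ the Vandermonde matrix and $D=\operatorname{diag}(\lambda_1^{n_0},\dots,\lambda_t^{n_0})$. It then observes that $\det(VD)=\bigl(\prod_j\lambda_j^{n_0}\bigr)\prod_{j<k}(\lambda_k-\lambda_j)$ is a unit, so $VD$ is invertible over $A$. That is essentially your adjugate ``alternative'', except that the paper absorbs the shift by $n_0$ into the diagonal factor $D$ instead of substituting $d_j=c_j\lambda_j^{n_0}$ first.

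Your inductive elimination avoids two ingredients the paper relies on: the Vandermonde determinant formula over an arbitrary commutative ring, and the fact that a square matrix with unit determinant is invertible. It uses only the ring axioms and cancellation by the units $\lambda_t-\lambda_j$. It also makes clear where each hypothesis enters. The unit differences drive each induction step, while invertibility of the $\lambda_j$ is used only in the shift to $n_0=0$; in the paper this corresponds to the factor $\prod_j\lambda_j^{n_0}$ of the determinant.

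Your elimination step ($\lambda_t$ times the relation at $m$, minus the relation at $m+1$) is exactly the row operation used to prove the Vandermonde determinant formula. So the two proofs are the same computation packaged differently: the paper's is shorter given standard determinant facts, and yours is self-contained.

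A minor point: the ``up to sign'' caveat is unnecessary, since the hypothesis with $k=t$ gives exactly $\lambda_t-\lambda_j\in A^\times$.
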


\begin{proof}
The hypotheses say $VD\,(c_1,\dots,c_t)^{\mathsf T}=0$, where $V$ is the Vandermonde
matrix on $\lambda_1,\dots,\lambda_t$ and
$D=\operatorname{diag}(\lambda_1^{n_0},\dots,\lambda_t^{n_0})$. Then
$\det(VD)=\bigl(\prod_j\lambda_j^{n_0}\bigr)\prod_{j<k}(\lambda_k-\lambda_j)$ is a product
of units, hence a unit, so $VD$ is invertible over $A$. Hence \(c_1=\cdots=c_t=0\).
\end{proof}

\begin{remark}
Since $d(a)$ and $s(a)$ denote the minimal eventual period and preperiod of a
sequence $a$, we shall, consistently with the notation in \eqref{eq:kb} and \eqref{eq:conj}, abbreviate
\[
d\bigl((E_n\bmod p^r)_{n\ge0}\bigr)=d(p^r),
\qquad
s\bigl((E_n\bmod p^r)_{n\ge0}\bigr)=s(p^r).
\]
\end{remark}
\begin{lemma}\label{lem:upper}
Let $p$ be an odd prime, $r\ge1$, and $d_1:=\operatorname{lcm}(p-1,4)$. Then
$(ki)^{p^{r-1}d_1}=1$ in $S_r$ for every $k\in\Z$ with $p\nmid k$; consequently
$F_{n+p^{r-1}d_1}=F_n$ for all $n\ge r$, and therefore $d(p^r)\mid p^{r-1}d_1$ and
$s(p^r)\le r$.
\end{lemma}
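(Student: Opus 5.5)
The plan is to split $(ki)^{p^{r-1}d_1}=k^{p^{r-1}d_1}\,i^{p^{r-1}d_1}$ and handle the two factors separately, then feed the result into the frequency expansion of Proposition~\ref{prop:expansion}(iv).

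\emph{The factor $i^{p^{r-1}d_1}$.} Since $4\mid d_1$, we have $4\mid p^{r-1}d_1$. By Lemma~\ref{lem:Sr}(iii), $\ord(i)=4$, so this factor equals $1$.

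\emph{The factor $k^{p^{r-1}d_1}$.} Since $p\nmid k$, the class $[k]_{p^r}$ lies in $(\Z/p^r\Z)^\times$. This group has order $\varphi(p^r)=p^{r-1}(p-1)$, and $p^{r-1}(p-1)$ divides $p^{r-1}d_1$. By Euler's theorem, $k^{p^{r-1}d_1}\equiv1\pmod{p^r}$. Transporting this through the unital homomorphism $\iota_r$ of Lemma~\ref{lem:Sr}(ii) gives $k^{p^{r-1}d_1}=1$ in $S_r$. Combining the two factors, $(ki)^{p^{r-1}d_1}=1$ in $S_r$.

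\emph{Periodicity.} Fix $n\ge r$ and put $P:=p^{r-1}d_1$. Then $n+P\ge r$ as well, so Proposition~\ref{prop:expansion}(iv) applies at both indices:
\[
F_{n+P}=\sum_{\substack{1\le k\le N-1\\ p\nmid k}}H_k(ki)^n(ki)^{P}=\sum_{\substack{1\le k\le N-1\\ p\nmid k}}H_k(ki)^n=F_n .
\]
So $P$ is an eventual period of $(F_n)$ with threshold $r$.

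\emph{Conclusions.} By Remark~\ref{rem:transfer}, $(F_n)$ and $(E_n\bmod p^r)$ have the same eventual periods and preperiod. Hence $P\in\Pi$, and Lemma~\ref{lem:pi} gives $d(p^r)\mid P$. Since $r$ is an admissible threshold for $P$, Corollary~\ref{cor:preperiod} gives $s(p^r)\le r$.

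There is no real obstacle. The only points needing care are the coefficientwise justification that integer congruences transfer into $S_r$, and checking that the threshold $n\ge r$ is what allows the terms with $p\mid k$ to be dropped.
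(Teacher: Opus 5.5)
Your proposal is correct and follows essentially the same route as the paper. The paper also uses Euler's theorem in $\Z/p^r\Z$ and transports it to $S_r$ via $\iota_r$, writing $d_1=\delta(p-1)$ with $\delta\in\{1,2\}$ rather than citing $p^{r-1}(p-1)\mid p^{r-1}d_1$ directly; it combines this with $i^{p^{r-1}d_1}=1$ from $4\mid d_1$, and then concludes via Proposition~\ref{prop:expansion}(iv), Lemma~\ref{lem:pi} and Corollary~\ref{cor:preperiod} exactly as you do.
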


\begin{proof}
Put $\delta:=d_1/(p-1)\in\{1,2\}$. For $p\nmid k$ Euler's theorem
\cite[p.~33]{IrelandRosen} gives $k^{p^{r-1}(p-1)}=1$ in $A_r$, hence in $S_r$ by
Lemma~\ref{lem:Sr}(ii), so $k^{p^{r-1}d_1}=\bigl(k^{p^{r-1}(p-1)}\bigr)^{\delta}=1$; and
$i^{p^{r-1}d_1}=1$ since $4\mid d_1$. Thus $(ki)^{p^{r-1}d_1}=1$. For $n\ge r$
Proposition~\ref{prop:expansion}(iv) expresses $F_n$ as a sum over $p\nmid k$ of terms
$H_k(ki)^n$, each of which is unchanged when $n$ is increased by $p^{r-1}d_1$; hence
$F_{n+p^{r-1}d_1}=F_n$. So $p^{r-1}d_1$ is an eventual period with threshold $r$, and
Lemma~\ref{lem:pi} and Corollary~\ref{cor:preperiod} give the two conclusions.
\end{proof}

\begin{lemma}\label{lem:lower}
Let $p$ be an odd prime and $d_1:=\operatorname{lcm}(p-1,4)$. Then $d_1\mid d(p)$.
\end{lemma}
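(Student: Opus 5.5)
We use the case $r=1$ of the frequency expansion. Proposition~\ref{prop:expansion}(iv) with $N=p$ gives, for every $n\ge1$,
\[
F_n=\sum_{k=1}^{p-1}H_k(ki)^n\qquad\text{in }S_1 .
\]
Set $d:=d(p)$. By Lemma~\ref{lem:upper} we have $s(p)\le1$, so Corollary~\ref{cor:preperiod} gives $F_{n+d}=F_n$ for all $n\ge1$. Substituting the expansion, we get
\[
\sum_{k=1}^{p-1}H_k(ki)^n\bigl((ki)^d-1\bigr)=0\qquad\text{for all }n\ge1 .
\]

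We now apply Lemma~\ref{lem:vdm} in $A=S_1$, taking $t=p-1$, $\lambda_k=ki$ and $c_k=H_k((ki)^d-1)$, with $p-1$ consecutive values $n=1,\dots,p-1$. The hypotheses hold for the following reasons.
\begin{itemize}
\item Each $\lambda_k=ki$ is a unit, because $k$ is a unit mod $p$ (Lemma~\ref{lem:Sr}(ii)) and $i$ is a unit.
\item For $j\neq k$ in $\{1,\dots,p-1\}$, the difference $\lambda_k-\lambda_j=(k-j)i$ is a unit by Corollary~\ref{cor:units}.
\end{itemize}
Hence every $c_k=0$. Since $H_k\in S_1^\times$ by Proposition~\ref{prop:expansion}(ii), this yields
\[
(ki)^d=1\quad\text{in }S_1\qquad\text{for every }1\le k\le p-1 .
\]

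Two choices of $k$ finish the argument.
\begin{itemize}
\item Taking $k=1$ gives $i^d=1$, so $4\mid d$ by Lemma~\ref{lem:Sr}(iii).
\item For a general $k$ we can now cancel: $(ki)^d=k^d i^d=k^d$, so $k^d=1$ in $S_1$. By injectivity of $\iota_1$ (Lemma~\ref{lem:Sr}(ii)) this means $k^d\equiv1\pmod p$. Choosing $k$ to be a primitive root mod $p$ forces $(p-1)\mid d$.
\end{itemize}
Therefore $d_1=\operatorname{lcm}(p-1,4)$ divides $d(p)$.

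The main point to check is the Vandermonde step. We need enough consecutive exponents $n\ge s(p)$, which is fine since $s(p)\le1$. The coefficients $c_k$ involve $(ki)^d-1$, which need not be a unit, but Lemma~\ref{lem:vdm} allows arbitrary coefficients in $A$, so this causes no difficulty. The other point is to split $(ki)^d$ correctly into its $i$-part and its $k$-part, which the choice $k=1$ handles.
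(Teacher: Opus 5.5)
Your proof is correct and follows essentially the same route as the paper: the frequency expansion at $r=1$ and the Vandermonde lemma give $(ki)^{d(p)}=1$ in $S_1$, and then $k=1$ yields $4\mid d(p)$ while a primitive root yields $(p-1)\mid d(p)$. The only cosmetic difference is how the starting index is chosen: you fix $n\ge1$ using $s(p)\le1$ from Lemma~\ref{lem:upper}, whereas the paper takes an arbitrary threshold $t$ and starts at $n_0=\max\{t,1\}$, so its argument does not need the preperiod bound.
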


\begin{proof}
Work at $r=1$ and put $P:=d(p)$. Choose $t\ge0$ such that $F_{n+P}=F_n$ for all
$n\ge t$, and set $n_0:=\max\{t,1\}$. Since $r=1$, Proposition~\ref{prop:expansion}(iv)
applies for $n\ge1$, with $p\nmid k$ automatic for $1\le k\le p-1$. Thus, for all $n\ge n_0$,
\[
0=F_{n+P}-F_n=\sum_{k=1}^{p-1}H_k\bigl((ki)^P-1\bigr)(ki)^n .
\]
Set $\lambda_k:=ki$ and $c_k:=H_k((ki)^P-1)$. Each $\lambda_k$ is a unit of $S_1$ by
Lemma~\ref{lem:Sr}\textup{(ii),(iii)}, and
for distinct $j,k\in\{1,\dots,p-1\}$ the difference $\lambda_k-\lambda_j=(k-j)i$ is a
unit by Corollary~\ref{cor:units}. Applying Lemma~\ref{lem:vdm} to the $p-1$ consecutive
values $n=n_0,\dots,n_0+p-2$ gives $c_k=0$ for $1\le k\le p-1$, and since
$H_k\in S_1^\times$ we conclude $(ki)^P=1$ for $1\le k\le p-1$.

Taking $k=1$ gives $i^P=1$, hence $4\mid P$ by Lemma~\ref{lem:Sr}(iii). For general $k$
we then get $(ki)^P=k^P=1$ in $S_1$, hence in $A_1$ by injectivity. As $A_1$ is a
field, the classes $1,\dots,p-1$ are precisely the elements of $A_1^\times$, which is
cyclic of order $p-1$ \cite[p.~40]{IrelandRosen}; applying $x^P=1$ to a generator gives
$p-1\mid P$. This and above $4\mid P$ gives $d_1=\operatorname{lcm}(p-1,4)\mid P=d(p)$.
\end{proof}
\begin{theorem}[Knuth--Buckholtz \cite{KB}]\label{thm:kb}
Let $p$ be an odd prime. Then $d(p)=\operatorname{lcm}(p-1,4)$, and for every $r\ge1$,
\[
s(p^r)\le r,\qquad d(p^r)\mid p^{r-1}d(p).
\]
\end{theorem}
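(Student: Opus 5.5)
The theorem should follow by combining Lemma~\ref{lem:upper} and Lemma~\ref{lem:lower}. Throughout, set $d_1:=\operatorname{lcm}(p-1,4)$. First I will establish the identity $d(p)=d_1$. Everything is transferred to the sequence $(F_n)$ via Remark~\ref{rem:transfer}, so the periods and preperiods of $(E_n\bmod p^r)$ and of $(F_n)$ agree.

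For $d(p)=d_1$, I apply Lemma~\ref{lem:upper} with $r=1$. It gives $d(p)\mid p^{0}d_1=d_1$. Lemma~\ref{lem:lower} gives the reverse divisibility $d_1\mid d(p)$. Both quantities are positive integers, so mutual divisibility forces $d(p)=d_1$.

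For general $r\ge1$, Lemma~\ref{lem:upper} gives directly
\[
d(p^r)\mid p^{r-1}d_1 \qquad\text{and}\qquad s(p^r)\le r.
\]
Substituting $d_1=d(p)$ from the first step turns these into exactly the two claimed statements, $s(p^r)\le r$ and $d(p^r)\mid p^{r-1}d(p)$.

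There is essentially no obstacle, since the substantive work was already done in the two lemmas. The Vandermonde argument in Lemma~\ref{lem:lower} and the frequency expansion of Proposition~\ref{prop:expansion}(iv) carry the real content. The only point needing care is bookkeeping: Lemma~\ref{lem:upper} states its preperiod and period conclusions through Lemma~\ref{lem:pi} and Corollary~\ref{cor:preperiod}, and these conclusions must be read as statements about $d(p^r)$ and $s(p^r)$. That reading is exactly what Remark~\ref{rem:transfer} justifies.
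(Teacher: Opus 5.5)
Your proposal is correct and is exactly the paper's proof. Lemma~\ref{lem:upper} at $r=1$ and Lemma~\ref{lem:lower} give mutual divisibility, hence $d(p)=\operatorname{lcm}(p-1,4)$, and the remaining two assertions are Lemma~\ref{lem:upper} for general $r$ with $d_1=d(p)$ substituted.
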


\begin{proof}
Lemma~\ref{lem:upper} at $r=1$ gives $d(p)\mid d_1$ and Lemma~\ref{lem:lower} gives
$d_1\mid d(p)$, so $d(p)=d_1$. The remaining assertions are Lemma~\ref{lem:upper} with
$d_1=d(p)$.
\end{proof}

We include this proof because it falls out of the frequency expansion; it is logically
independent of the original argument of \cite{KB}.

Since \(d(p)=\operatorname{lcm}(p-1,4)\mid 4(p-1)\), while \(p\nmid4\) and
\(p\nmid p-1\), we have \(p\nmid d(p)\). Hence, if $b_1=1<b_2<\cdots <b_m=d(p)$ are the positive
divisors of $d(p)$, every positive divisor of $p^{r-1}d(p)$, and therefore $d(p^r)$,
is of the form $p^jb_\ell$ with $0\le j\le r-1$ and $1\le\ell\le m$. Thus the possible
values of $d(p^r)$ lie in
\[
\begin{pmatrix}
1 & b_2 & \cdots & b_{m-1} & d(p)\\
p & pb_2 & \cdots & pb_{m-1} & pd(p)\\
p^2 & p^2b_2 & \cdots & p^2b_{m-1} & p^2d(p)\\
\vdots & \vdots & & \vdots & \vdots\\
p^{r-1} & p^{r-1}b_2 & \cdots & p^{r-1}b_{m-1} & p^{r-1}d(p)
\end{pmatrix}.
\]
We shall next prove $d(p)\mid d(p^r)$, which forces $d(p^r)$ to lie in the last column.

\begin{lemma}\label{lem:divides}
Let $p$ be an odd prime and $r\ge1$. Then $d(p)\mid d(p^r)$, and consequently
\[
d(p^r)\in\bigl\{d(p),\,pd(p),\,p^2d(p),\dots,p^{r-1}d(p)\bigr\}.
\]
\end{lemma}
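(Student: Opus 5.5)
The plan is to use the trivial reduction map rather than the frequency expansion. If $E_m\equiv E_n \pmod{p^r}$ then $E_m\equiv E_n\pmod p$. So every eventual period of $(E_n\bmod p^r)_{n\ge0}$ is also an eventual period of $(E_n\bmod p)_{n\ge0}$.

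Concretely, set $P:=d(p^r)$. By Corollary~\ref{cor:preperiod} (or simply the definition of $\Pi_a$), there is a $t\ge0$ with $E_{n+P}\equiv E_n\pmod{p^r}$ for all $n\ge t$. Reducing modulo $p$ gives $E_{n+P}\equiv E_n\pmod p$ for all $n\ge t$. Hence $P$ lies in $\Pi$ for the sequence $(E_n\bmod p)$. By Lemma~\ref{lem:pi} that set is exactly the set of positive multiples of $d(p)$, so $d(p)\mid d(p^r)$.

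For the consequence, Theorem~\ref{thm:kb} gives $d(p^r)\mid p^{r-1}d(p)$. Write $d(p^r)=d(p)\,m$ with $m$ a positive integer. Then $m\mid p^{r-1}$, so $m=p^j$ with $0\le j\le r-1$. This places $d(p^r)$ in the stated set, which is the last column of the matrix displayed above. The remark $p\nmid d(p)$ is not needed for this step, since the divisibility $m\mid p^{r-1}$ follows directly from cancelling $d(p)$.

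I expect no real obstacle. The only point needing care is that the period relation transfers with the same threshold $t$, and this is immediate. Equivalently, one can work in $S_r$ and use the quotient map $S_r\to S_1$ together with Remark~\ref{rem:transfer}, but the integer congruence formulation is simpler.
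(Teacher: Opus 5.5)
Your proof is correct and matches the paper's: you reduce the congruence modulo $p^r$ to one modulo $p$ with the same threshold, then apply Lemma~\ref{lem:pi} to get $d(p)\mid d(p^r)$. For the consequence, the paper reads the possible values off its table of divisors of $p^{r-1}d(p)$, which uses $p\nmid d(p)$; you instead cancel $d(p)$ directly from $d(p)m\mid p^{r-1}d(p)$, which is equally valid and slightly more economical.
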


\begin{proof}
Eventually $p^r\mid E_{n+d(p^r)}-E_n$, hence $p\mid E_{n+d(p^r)}-E_n$; so $d(p^r)$ is an
eventual period modulo $p$ and Lemma~\ref{lem:pi} gives $d(p)\mid d(p^r)$.
\end{proof}
For $r=1$, the Vandermonde argument of Lemma~\ref{lem:lower} works because the frequency indices are $1,\dots,p-1$, so distinct indices never differ by a multiple of $p$. For $r>1$, however, the range $1\le k\le p^r-1$ already contains, for instance, $1$ and $1+p$, and the corresponding frequencies differ by $(1+p)i-i=pi$, which is not a unit of $S_r$. Thus, we cannot rely on the same argument in $S_r$.

On the other hand, we already know $d(p^r)\in\{d(p),pd(p),\dots,p^{r-1}d(p)\}$, so it is enough to test the penultimate candidate $L:=p^{r-2}d(p)$. If $L$ is an eventual period, then $d(p^r)\mid L$ and the conjectured value is impossible; if it is not, none of the smaller candidates can occur since each divides $L$. Thus the problem reduces to deciding whether $L$ is an eventual period.

For $n\ge r$, Proposition~\ref{prop:expansion}\textup{(iv)} and $i^L=1$ since $4\mid L$, give $F_{n+L}-F_n=\sum_{p\nmid k}H_k(k^L-1)(ki)^n$. The dependence on $L$ is therefore concentrated in the factors $k^L-1$. We shall show that each $k^L-1$ terms in the sum contains the factor $p^{r-1}$, so that $F_{n+L}-F_n=p^{r-1}X_n$ for a suitable $X_n\in S_r$. Lemma~\ref{lem:Sr}\textup{(v)} then yields $F_{n+L}-F_n=0\iff X_n\in pS_r\iff\bar X_n=\bar0$ in $S_r/pS_r$. Therefore, by considering $X_n$ in $S_r/pS_r$, that is, $\bar X_n$,
we can constructively determine whether $L$ is an eventual period.

Moreover, passing from $S_r$ to $S_r/pS_r$ makes the frequencies indexed by $k,k+p,k+2p,\dots$ collapse to the same residue class modulo $p$ in $S_r/pS_r$. Hence the original family is grouped into the $p-1$ nonzero residue classes modulo $p$, whose corresponding frequencies again have pairwise unit differences. As a result, passing to $\bar X_n$ in $S_r/pS_r$ simultaneously produces a criterion for whether $L$ is an eventual period and enables us to re-use the Vandermonde argument for $r>1$.

\begin{lemma}\label{lem:bn}
Let $p$ be an odd prime and $r\ge2$. Put
\[
\delta:=\frac{d(p)}{p-1}
=\begin{cases}1,&p\equiv1\pmod4,\\2,&p\equiv3\pmod4,\end{cases}
\qquad
M:=p^{r-2}(p-1),\qquad L:=\delta M=p^{r-2}d(p).
\]
For $1\le k\le p^r-1$ with $p\nmid k$, put
\[
Q_k:=\frac{k^M-1}{p^{r-1}}\in\Z.
\]
Then:
\begin{enumerate}
\item[\textup{(i)}] $(ki)^L-1=\delta p^{r-1}Q_k$ in $S_r$ for every such $k$;
\item[\textup{(ii)}] for $n\ge r$, setting
\[
X_n:=\delta\!\!\sum_{\substack{1\le k\le p^r-1\\p\nmid k}}
H_kQ_k(ki)^n\in S_r,
\]
one has $F_{n+L}-F_n=p^{r-1}X_n$ and
\[
\bar X_n=\bar\delta\sum_{a=1}^{p-1}G_a(\bar a\,\bar i)^n,
\qquad
G_a:=\sum_{j=0}^{p^{r-1}-1}
\bar H_{a+jp}\,\bar Q_{a+jp}\in S_r/pS_r;
\]
\item[\textup{(iii)}] $L$ is an eventual period of $(F_n)_{n\ge0}$ if and only if
$G_1=\cdots=G_{p-1}=\bar0$.
\end{enumerate}
\end{lemma}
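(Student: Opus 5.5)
The plan is to prove the three parts in order, each resting on earlier results.

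\emph{Part (i).} First check that $Q_k\in\Z$. The group $(\Z/p^{r-1}\Z)^\times$ has order $p^{r-2}(p-1)=M$, so Euler's theorem gives $k^M\equiv1\pmod{p^{r-1}}$ whenever $p\nmid k$. Next, $4\mid L$ because $4\mid d(p)$ by Theorem~\ref{thm:kb}, so $i^L=1$ and $(ki)^L=k^L=(k^M)^\delta$. Writing $k^M=1+p^{r-1}Q_k$, we get:
\begin{itemize}
\item if $\delta=1$, then $k^L-1=p^{r-1}Q_k$ directly;
\item if $\delta=2$, then $(1+p^{r-1}Q_k)^2-1=2p^{r-1}Q_k+p^{2r-2}Q_k^2$, and $2r-2\ge r$ for $r\ge2$, so the last term vanishes in $S_r$ by Lemma~\ref{lem:Sr}(vi).
\end{itemize}
The integer identity then maps into $S_r$ through $\Z\to S_r$.

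\emph{Part (ii).} For $n\ge r$, we have $n+L\ge r$ as well, so Proposition~\ref{prop:expansion}(iv) applies at both $n$ and $n+L$. Subtracting gives
\[
F_{n+L}-F_n=\sum_{p\nmid k}H_k\bigl((ki)^L-1\bigr)(ki)^n,
\]
and substituting (i) yields $p^{r-1}X_n$. To compute $\bar X_n$, reduce modulo $pS_r$, which is a ring homomorphism. Write each $k$ with $p\nmid k$ and $1\le k\le p^r-1$ uniquely as $k=a+jp$ with $1\le a\le p-1$ and $0\le j\le p^{r-1}-1$. The residues $a\neq0$ together with $j$ in this range cover exactly those $k$. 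Since $k\equiv a\pmod p$, we have $\overline{ki}=\bar a\bar i$, and regrouping the sum gives the stated formula with $G_a$.

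\emph{Part (iii).} This is the main step. By Corollary~\ref{cor:preperiod}, $L$ is an eventual period if and only if $F_{n+L}=F_n$ for all $n\ge$ some $t$, and we may take $t\ge r$. By Lemma~\ref{lem:Sr}(v) with $k=1$, $p^{r-1}X_n=0$ if and only if $X_n\in pS_r$, that is, $\bar X_n=\bar0$. So $L$ is an eventual period if and only if $\bar X_n=\bar0$ for all large $n$.

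The backward direction is immediate from the formula for $\bar X_n$. For the forward direction, apply Lemma~\ref{lem:vdm} in $A=S_r/pS_r$ with $\lambda_a=\bar a\bar i$ and $c_a=\bar\delta G_a$, using $p-1$ consecutive large values of $n$:
\begin{itemize}
\item the $\lambda_a$ are units, since $a$ and $i$ are units in $S_r$ (Lemma~\ref{lem:Sr}(ii),(iii)) and quotient maps preserve units;
\item their pairwise differences $\overline{(a-b)i}$ are units by Corollary~\ref{cor:units}.
\end{itemize}
The lemma gives $\bar\delta G_a=\bar0$ for every $a$. Since $\delta\in\{1,2\}$ is a unit of $S_r$, its image $\bar\delta$ is a unit, so each $G_a=\bar0$.

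The only delicate points are the $\delta=2$ squaring in (i) and the fact that the $p$-adic valuation bookkeeping matches Lemma~\ref{lem:Sr}(v). I expect neither to be an obstacle.
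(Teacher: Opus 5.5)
Your proof is correct and follows the paper's argument essentially step for step. For (i) you use Euler's theorem modulo $p^{r-1}$ together with $p^{2(r-1)}S_r=\{0\}$; for (ii) you regroup via $k=a+jp$; for (iii) you combine Lemma~\ref{lem:Sr}(v), the Vandermonde Lemma~\ref{lem:vdm} over $S_r/pS_r$, and the invertibility of $\bar\delta$. One small quibble: the reformulation of ``$L$ is an eventual period'' at the start of (iii) is just the definition of $\Pi_a$, not Corollary~\ref{cor:preperiod}, which presupposes $P\in\Pi_a$; this misattribution does not affect the argument.
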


\begin{proof}
For $p\nmid k$, Euler's theorem \cite[p.~33]{IrelandRosen} gives
$k^{p^{r-2}(p-1)}\equiv1\pmod{p^{r-1}}$, so $Q_k\in\Z$ is well defined and
$k^M=1+p^{r-1}Q_k$ in $\Z$. Taking this equality modulo $p^r$ and reading it in
$S_r$, we retain $k^M=1+p^{r-1}Q_k$. Moreover
$4\mid d(p)\mid L$, so $i^L=1$.Thus, since
$\delta\in\{1,2\}$,
\[
(ki)^L-1=k^L-1=(1+p^{r-1}Q_k)^\delta-1
=\delta p^{r-1}Q_k
\]
in \(S_r\), the last equality for \(\delta=2\) following from
\(p^{2(r-1)}S_r=\{0\}\) by Lemma~\ref{lem:Sr}\textup{(vi)}, since
\(2(r-1)\ge r\) for \(r\ge2\). This proves \textup{(i)}.
For $n\ge r$, Proposition~\ref{prop:expansion}\textup{(iv)} and \textup{(i)} give
\[
F_{n+L}-F_n
=\sum_{\substack{1\le k\le p^r-1\\p\nmid k}}
H_k\bigl((ki)^L-1\bigr)(ki)^n
=p^{r-1}X_n.
\]
Reducing $X_n$ modulo $p$, every admissible $k$ is uniquely $k=a+jp$ with
$1\le a\le p-1$ and $0\le j\le p^{r-1}-1$, and
$\overline{a+jp}=\bar a$; grouping by $a$ gives \textup{(ii)}.

For \textup{(iii)}, suppose first that $L$ is an eventual period. Choose $t\ge0$ with
$F_{n+L}=F_n$ for all $n\ge t$, and put $n_0:=\max\{t,r\}$. Then
$p^{r-1}X_n=0$ for all $n\ge n_0$, so Lemma~\ref{lem:Sr}\textup{(v)} gives
$X_n\in pS_r$, equivalently $\bar X_n=\bar0$ in $S_r/pS_r$. Hence
\[
\bar\delta\sum_{a=1}^{p-1}G_a(\bar a\,\bar i)^n=\bar0
\qquad(n\ge n_0).
\]
The frequencies $\bar a\,\bar i$ are units and have pairwise unit differences by
Corollary~\ref{cor:units}; applying Lemma~\ref{lem:vdm} to
$n=n_0,\dots,n_0+p-2$ gives $\bar\delta G_a=\bar0$ for $1\le a\le p-1$.
Since $\delta\in\{1,2\}$, we have $\delta\in S_r^\times$, trivially for
$\delta=1$ and by Lemma~\ref{lem:Sr}\textup{(iv)} for $\delta=2$. Hence $\bar\delta$ is a unit of
$S_r/pS_r$, since quotient homomorphisms preserve units. Therefore
$G_a=\bar0$ for $1\le a\le p-1$.

Conversely, if $G_a=\bar0$ for $1\le a\le p-1$, then \textup{(ii)} gives
$\bar X_n=\bar0$ for every $n\ge r$, hence $X_n\in pS_r$. By
Lemma~\ref{lem:Sr}\textup{(v)}, $p^{r-1}X_n=0$, so
$F_{n+L}=F_n$ for every $n\ge r$. Thus $L$ is an eventual period.
\end{proof}
\begin{proposition}\label{prop:G1}
Let $p$ be an odd prime and $r\ge2$, and let $G_1\in S_r/pS_r$ be as in
Lemma~\ref{lem:bn}. Then $G_1=\bar1$.
\end{proposition}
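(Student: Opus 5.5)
The plan is to compute $G_1=\sum_{j=0}^{p^{r-1}-1}\bar H_{1+jp}\bar Q_{1+jp}$ in closed form. I would first reduce each factor modulo $p$ to an explicit function of $j$, and then evaluate the resulting arithmetico-geometric sum in $S_r/pS_r$.

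\emph{Step 1: the factor $\bar Q_{1+jp}$.} I expect this to be the only step with real arithmetic content. Put $k=1+jp$ and $M=p^{r-2}(p-1)$. The standard lifting fact for odd $p$ gives $(1+jp)^{p^{r-2}}\equiv 1+jp^{r-1}\pmod{p^r}$ for $r\ge2$. It is proved by induction on the exponent: each $p$-th power multiplies the linear term by $p$, and the remaining binomial terms are divisible by one more power of $p$ because $p$ is odd. Raising to the power $p-1$ gives
\[
k^M\equiv(1+jp^{r-1})^{p-1}\equiv 1+(p-1)jp^{r-1}\pmod{p^r},
\]
since $p^{2(r-1)}\equiv0$. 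Hence $Q_{1+jp}\equiv (p-1)j\equiv -j\pmod p$, that is, $\bar Q_{1+jp}=-\bar j$.

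\emph{Step 2: the factor $\bar H_{1+jp}$.} By Proposition~\ref{prop:expansion}(ii), $H_{1+jp}=2i^{-1}\eta^{-1}\omega^j$ with $\omega:=i^{-p}\in\{\pm i\}$. Therefore
\[
G_1=-2\,\bar i^{-1}\bar\eta^{-1}\sum_{j=0}^{m-1}\bar j\,\bar\omega^{\,j},\qquad m:=p^{r-1}.
\]

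\emph{Step 3: the sum.} The identity
\[
(1-\omega)^2\sum_{j=0}^{m-1}j\omega^j=\omega\bigl(1-m\omega^{m-1}+(m-1)\omega^m\bigr)
\]
holds in any commutative ring; it is checked by telescoping. The element $1-\omega=1\mp i$ is a unit by Lemma~\ref{lem:Sr}(iv), so we may divide by $(1-\omega)^2$. Modulo $p$ we have $m\equiv0$, and the sum reduces to $\bar\omega(1-\bar\omega^{m})/(1-\bar\omega)^2$.

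\emph{Step 4: simplification.} Three observations finish the computation.
\begin{itemize}
\item $\omega^m=i^{-pm}=i^{-N}$, so by the definition of $\eta$ we get $1-\omega^m=-(i^{-N}-1)=-\eta i^{-1}$.
\item Since $\omega^2=-1$, we have $(1-\omega)^2=1-2\omega+\omega^2=-2\omega$.
\item $i^{-2}=-1$.
\end{itemize}
Substituting these,
\[
G_1=-2\,\bar i^{-1}\bar\eta^{-1}\cdot\frac{\bar\omega\,(-\bar\eta\,\bar i^{-1})}{-2\bar\omega}=\bar 1 .
\]
In this last step $\eta$ cancels, so the cases $N\equiv1$ and $N\equiv3\pmod4$ never have to be separated. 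Every division above is by an element that is a unit by Lemma~\ref{lem:Sr}(iv) and Proposition~\ref{prop:expansion}(i), and units remain units in the quotient, as in Corollary~\ref{cor:units}.

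The main obstacle is Step 1, specifically stating the congruence $(1+jp)^{p^{r-2}}\equiv1+jp^{r-1}\pmod{p^r}$ carefully and making sure it is valid down to $r=2$. The rest is bookkeeping with powers of $i$.
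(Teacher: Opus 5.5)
Your proof is correct and follows the same plan as the paper. Both arguments reduce the two factors to $\bar Q_{1+jp}=-\bar j$ and $\bar H_{1+jp}=\bar2\,\bar i^{-1}\bar\eta^{-1}\bar\omega^{\,j}$, evaluate the resulting arithmetico-geometric sum, and cancel $\bar\eta$ using $\bar\omega^{\,p^{r-1}}=\bar i^{-N}$. The only differences are local streamlinings: you get the congruence for $Q_{1+jp}$ from the standard induction $(1+jp)^{p^{s}}\equiv1+jp^{s+1}\pmod{p^{s+2}}$ rather than the paper's bound on $v_p\binom{M}{h}$ via $h\binom Mh=M\binom{M-1}{h-1}$, and you apply the summation identity once with $n=p^{r-1}-1$ (using $\overline{p^{r-1}}=\bar0$) rather than splitting $j=pm+s$ into the product $RW$, which also avoids the paper's division by $\zeta^p-\bar1$.
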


\begin{proof}
By Lemma~\ref{lem:Sr}\textup{(iii),(iv)}, Proposition~\ref{prop:expansion}\textup{(i)}
and Corollary~\ref{cor:units}, the classes $\bar i$, $\bar2$ and $\bar\eta$ are units of
$S_r/pS_r$. Put
\[
\zeta:=\bar i^{\,-p},
\qquad
c_1:=\bar2\,\bar i^{\,-1}\bar\eta^{\,-1}.
\]
Since $p$ is odd, $\zeta\in\{\bar i,-\bar i\}$, so $\zeta^2=-\bar1$; moreover
$p^2\equiv1\pmod4$ gives
\[
\zeta^p=\bar i^{\,-p^2}=\bar i^{\,-1}=-\bar i.
\]
Thus $\zeta$, $\bar1-\zeta=\bar1\mp\bar i$ and
$\zeta^p-\bar1=-(\bar1+\bar i)$ are units of $S_r/pS_r$.
We first identify the two factors entering \(G_1\). Put
\(M=p^{r-2}(p-1)\), as in Lemma~\ref{lem:bn}. For
\(0\le j\le p^{r-1}-1\), expanding gives
\[
(1+jp)^M=1+Mjp+\sum_{h=2}^{M}\binom Mh j^hp^h.
\]
Every summand in the sum is divisible by \(p^r\): this is clear for
\(h\ge r\), while for \(2\le h<r\), writing \(h=p^au\) with \(p\nmid u\), we have
\(h-a\ge2\): indeed, this is immediate if \(a=0\), while for \(a\ge1\),
\(h=p^au\ge p^a\ge3^a\ge a+2\). Hence \(a\le h-2\le r-3\). The identity
\(h\binom Mh=M\binom{M-1}{h-1}\) therefore gives
\(p^{r-2-a}\mid\binom Mh\), and consequently each such summand is divisible by
\(p^{r-2-a+h}\), hence by \(p^r\). Thus
\[
(1+jp)^M\equiv1+Mjp
=1+jp^{r-1}(p-1)
\equiv1-jp^{r-1}\pmod{p^r},
\]
Thus there exists \(t\in\mathbb Z\) such that
\[
Q_{1+jp}
=\frac{(1+jp)^M-1}{p^{r-1}}
=-j+tp.
\]
Reducing this identity modulo $p^r$, viewing it in $S_r$, and then projecting onto
$S_r/pS_r$, we obtain
\[
\bar Q_{1+jp}=-\bar j.
\]
On the other hand, Proposition~\ref{prop:expansion}\textup{(ii)} gives
\[
\bar H_{1+jp}
=\bar2\,\bar i^{\,-1-jp}\bar\eta^{\,-1}
=c_1\zeta^j.
\]
Therefore
\[
G_1=-c_1\sum_{j=0}^{p^{r-1}-1}\bar j\,\zeta^j.
\]

Write $j=pm+s$ with $0\le m<p^{r-2}$ and $0\le s<p$. Then $\bar j=\bar s$, and the sum
factors as $RW$, where
\[
R:=\sum_{m=0}^{p^{r-2}-1}(\zeta^p)^m
=\frac{\zeta^{p^{r-1}}-\bar1}{\zeta^p-\bar1}
=\frac{\bar i^{\,-p^r}-\bar1}{\zeta^p-\bar1},
\qquad
W:=\sum_{s=0}^{p-1}\bar s\,\zeta^s.
\]
The identity $(1-X)^2\sum_{s=0}^{n}sX^s=X-(n+1)X^{n+1}+nX^{n+2}$ holds in $\mathbb Z[X]$. By Lemma~\ref{lem:hurwitz-basic}\textup{(i)}, the canonical map $\mathbb Z\to S_r/pS_r$ induces a coefficientwise unital ring homomorphism $\mathbb Z[X]\to (S_r/pS_r)[X]$. Evaluating at $X=\zeta$, taking $n=p-1$, and dividing by $(\bar1-\zeta)^2$, which is legitimate since $(\bar1-\zeta)^2$ is a unit in $S_r/pS_r$, together with $\bar p=\bar0$ and $\overline{p-1}=-\bar1$, gives
\[
W=\frac{\zeta-\zeta^{p+1}}{(\bar1-\zeta)^2}
=\frac{\zeta(\bar1-\zeta^p)}{(\bar1-\zeta)^2}.
\]
Recall Proposition~\ref{prop:expansion}, in $S_r/pS_r$ we also have
$\bar\eta=\bar i(\bar i^{\,-p^r}-\bar1)$, and therefore
\[
c_1R
=\bar2\,\bar i^{\,-1}\bar\eta^{\,-1}
 \frac{\bar\eta\,\bar i^{\,-1}}{\zeta^p-\bar1}
=\frac{-\bar2}{\zeta^p-\bar1}.
\]
Hence, noting
$(\bar1-\zeta)^2=-\bar2\zeta$,
\[
c_1RW
=\frac{-\bar2\,\zeta(\bar1-\zeta^p)}
{(\zeta^p-\bar1)(\bar1-\zeta)^2}
=\frac{\bar2\zeta}{-\bar2\zeta}
=-\bar1.
\]
Thus $G_1=-c_1RW=\bar1$.
\end{proof}
\begin{theorem}[Ramassamy's period conjecture]\label{thm:period}
Let $p$ be an odd prime and $r\ge1$. Then $d(p^r)=p^{r-1}d(p)$.
\end{theorem}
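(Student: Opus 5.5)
The proof will split according to whether $r=1$ or $r\ge2$. For $r=1$ the statement reads $d(p)=p^0d(p)$, which is trivial; equivalently, it is contained in Theorem~\ref{thm:kb}.

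For $r\ge2$, we combine three earlier results.
\begin{enumerate}
\item By Lemma~\ref{lem:divides}, $d(p^r)\in\{d(p),pd(p),\dots,p^{r-1}d(p)\}$.
\item Hence $d(p^r)\ne p^{r-1}d(p)$ would force $d(p^r)=p^jd(p)$ for some $0\le j\le r-2$. In that case $d(p^r)\mid L:=p^{r-2}d(p)$.
\item By Lemma~\ref{lem:pi}, every multiple of the minimal eventual period is an eventual period. So $L$ would be an eventual period of $(E_n\bmod p^r)$, and by Remark~\ref{rem:transfer} also of $(F_n)_{n\ge0}$.
\end{enumerate}

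Lemma~\ref{lem:bn}\textup{(iii)} now says that $L$ is an eventual period of $(F_n)$ if and only if $G_1=\cdots=G_{p-1}=\bar0$ in $S_r/pS_r$. In particular this requires $G_1=\bar0$. But Proposition~\ref{prop:G1} gives $G_1=\bar1$. It remains to check that $\bar1\ne\bar0$ in $S_r/pS_r$. If $1\in pS_r$, then by Lemma~\ref{lem:Sr}\textup{(v)} with $k=1$ we would have $p^{r-1}\cdot1=0$ in $S_r$. By the injectivity of $\iota_r$ in Lemma~\ref{lem:Sr}\textup{(ii)}, this would mean $p^r\mid p^{r-1}$, which is absurd. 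So $G_1\ne\bar0$, $L$ is not an eventual period, and this contradiction gives $d(p^r)=p^{r-1}d(p)$.

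No step here is a real obstacle, since all the substantive work is in Lemma~\ref{lem:bn} and Proposition~\ref{prop:G1}. The only points needing care are the nontriviality of $S_r/pS_r$, handled above, and the reduction to the single candidate $L$. That reduction depends on $p\nmid d(p)$, which makes the candidates a divisibility chain with each $p^jd(p)$, $j\le r-2$, dividing $L$.
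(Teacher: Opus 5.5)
Your proposal is correct and follows the paper's proof essentially step for step: you use Lemma~\ref{lem:divides} and Lemma~\ref{lem:pi} to reduce to the single candidate $L=p^{r-2}d(p)$, then Lemma~\ref{lem:bn}\textup{(iii)}, and you rule $L$ out using $G_1=\bar1\ne\bar0$ from Proposition~\ref{prop:G1} together with Lemma~\ref{lem:Sr}\textup{(ii),(v)}. One minor point: your closing remark that the reduction depends on $p\nmid d(p)$ is unneeded, since $p^jd(p)\mid p^{r-2}d(p)$ for $j\le r-2$ holds trivially, and $d(p)\mid d(p^r)\mid p^{r-1}d(p)$ already forces $d(p^r)/d(p)$ to be a power of $p$.
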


\begin{proof}
The case $r=1$ is immediate. Let $r\ge2$ and put $L:=p^{r-2}d(p)$. By
Lemmas~\ref{lem:divides} and~\ref{lem:pi}, together with
Lemma~\ref{lem:bn}\textup{(iii)},
\[
d(p^r)\ne p^{r-1}d(p)
\iff L\text{ is an eventual period}
\iff G_1=\cdots=G_{p-1}=\bar0.
\]
Thus one could test $G_1,G_2,\dots,G_{p-1}$ successively: if $G_1=\bar0$,
one would continue with $G_2$, and so on. However,
Proposition~\ref{prop:G1} already gives $G_1=\bar1\ne\bar0$, where
$\bar1\ne\bar0$ follows from Lemma~\ref{lem:Sr}\textup{(ii),(v)}. Hence logically
\[
d(p^r)=p^{r-1}d(p).
\]
\end{proof}

\section{Analysis of the preperiod}\label{sec:preperiod}
Let $p$ be an
odd prime and $r\ge1$, and put
\[
P:=p^{r-1}\operatorname{lcm}(p-1,4).
\]
By Lemma~\ref{lem:upper}, $P$ is an eventual period of $(F_n)_{n\ge0}$; we shall use
only this fact and do not rely on the fact that $P$ is the minimal eventual period, i.e.,
that $d(p^r)=P$ by Theorem~\ref{thm:period}.

Taking $n$-th coefficients in Proposition~\ref{prop:expansion}\textup{(iii)} and splitting
the frequency indices according to divisibility by $p$, for $n\ge0$ write
\[
F_n=U_n+B_n,
\qquad
U_n:=\sum_{\substack{1\le k\le p^r-1\\p\nmid k}}H_k(ki)^n,
\qquad
B_n:=\sum_{\substack{0\le k\le p^r-1\\p\mid k}}H_k(ki)^n .
\]

\begin{lemma}\label{lem:Bcriterion}
Let $p$ be an odd prime and $r\ge1$. For $1\le k\le r$,
\[
s(p^r)\le r-k
\iff
B_{r-1}=B_{r-2}=\cdots=B_{r-k}=0.
\]
\end{lemma}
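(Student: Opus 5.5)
The plan is to compare $F_{n+P}$ with $F_n$ directly through the splitting $F_n=U_n+B_n$. Since $P$ is an eventual period, Corollary~\ref{cor:preperiod} gives, for every $t\ge0$,
\[
s(p^r)\le t\iff F_{n+P}=F_n\ \text{for all } n\ge t .
\]
I will show that $F_{n+P}-F_n=-B_n$ for \emph{every} $n\ge0$, and that $B_n=0$ for $n\ge r$. Taking $t=r-k$ then gives the claim.

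First, $U$ is purely periodic with period $P$ from $n=0$ onward. By Lemma~\ref{lem:upper}, $(ki)^P=1$ in $S_r$ whenever $p\nmid k$. Hence $(ki)^{n+P}=(ki)^n$ for each term of $U_n$, and so $U_{n+P}=U_n$ for all $n\ge0$. No restriction $n\ge r$ is needed here, since $U$ is a finite combination of powers of units.

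Second, $B_n=0$ for every $n\ge r$. In each term of $B_n$ we write $k=p\ell$, so that $(ki)^n=p^n(\ell i)^n$, which vanishes by Lemma~\ref{lem:Sr}(vi) when $n\ge r$. The $k=0$ term needs care: it equals $H_0\cdot 0^n$, which is $0$ for $n\ge1$ and $H_0$ only at $n=0$. It is therefore also $0$ for $n\ge r\ge1$, and the convention $0^0=1$ matters only for $B_0$.

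Next, $P=p^{r-1}\operatorname{lcm}(p-1,4)\ge4p^{r-1}\ge r$, so $n+P\ge r$ for every $n\ge0$ and therefore $B_{n+P}=0$. Combining the three facts gives
\[
F_{n+P}-F_n=(U_{n+P}-U_n)+(B_{n+P}-B_n)=-B_n\qquad(n\ge0).
\]
With $t=r-k\ge0$, it follows that $s(p^r)\le r-k$ if and only if $B_n=0$ for all $n\ge r-k$. Since $B_n=0$ automatically for $n\ge r$, this is equivalent to $B_{r-k}=\cdots=B_{r-1}=0$.

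There is no serious obstacle. The point needing the most care is the edge case $k=r$, where $t=0$ and $B_0$ contains the $k=0$ mode $H_0\cdot0^0=H_0$. The argument above handles it uniformly, because only the identity $F_{n+P}-F_n=-B_n$ for all $n\ge0$ is used.
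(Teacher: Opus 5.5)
Your proof is correct and follows essentially the same route as the paper's. Both show that $U_n$ is $P$-periodic from $n\ge0$ because $(ki)^P=1$, that $B_n=0$ for $n\ge r$, and that $P\ge r$, which gives $F_{n+P}-F_n=-B_n$, and then conclude with Corollary~\ref{cor:preperiod}. The only cosmetic difference is that you check $B_n=0$ for $n\ge r$ directly, including the $k=0$ mode, whereas the paper reads this off from Proposition~\ref{prop:expansion}(iv), which rests on the same computation.
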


\begin{proof}
The same argument as in Lemma~\ref{lem:upper} applies here from $n\ge0$: there the
restriction $n\ge r$ was needed to eliminate from $F_n$ the terms with $p\mid k$,
whereas $U_n$ is already defined using only the terms with $p\nmid k$. Thus
$U_{n+P}=U_n$ for every $n\ge0$, while Proposition~\ref{prop:expansion}\textup{(iv)} gives $F_n=U_n$ for $n\ge r$, and hence $B_n=0$ for $n\ge r$.
Since $P=p^{r-1}\operatorname{lcm}(p-1,4)\ge 4p^{r-1}\ge 4\cdot3^{r-1}>r$, for
$0\le n\le r-1$ we have $B_{n+P}=0$ and
$F_{n+P}=U_{n+P}=U_n$, while $F_n=U_n+B_n$, hence
$F_{n+P}=F_n\iff B_n=0$ whenever $0\le n\le r-1$.  Equivalently, writing
$n=r-j$ for $1\le j\le k$, which is
$n=r-1,r-2,\ldots,r-k$ , and since the period already holds for all $n\ge r$, 
Corollary~\ref{cor:preperiod} gives $s(p^r)\le r-k\iff B_{r-1}=B_{r-2}=\cdots=B_{r-k}=0$.
\end{proof}

Thus, the above lemma naturally suggests that the preperiod may be determined by moving backwards through
$B_{r-1},B_{r-2},\dots,B_0$. In particular, $B_{r-1}\ne0$ implies
$s(p^r)=r$ for every $r\ge1$. For $r\ge2$, if, for some
$1\le k\le r-1$, $B_{r-1}=\cdots=B_{r-k}=0$ but
$B_{r-k-1}\ne0$, then $s(p^r)=r-k$.Finally, if
$B_{r-1}=B_{r-2}=\cdots=B_0=0$, then Lemma~\ref{lem:Bcriterion} gives
$s(p^r)\le0$, and since the preperiod is nonnegative, we have $s(p^r)=0$.

We now examine $B_{r-k}$ more closely for $r\ge1$ and $1\le k\le r$. Since $H_0$ and the coefficients $H_j$ for $j\ge1$ are different in Proposition~\ref{prop:expansion}\textup{(ii)}, and the $j=0$ term contributes when $k=r$, we treat this endpoint separately. Thus, the pairs $(r,k)$ split into two cases: either $k=r$, or $1\le k\le r-1$. In the latter case no such $k$ exists when $r=1$, so this case is equivalently $r\ge2$ and $1\le k\le r-1$. We first consider this latter case. Since in that case $r-k\ge1$, the $j=0$ term in the definition of $B_{r-k}$ vanishes, and hence we have $B_{r-k}=\sum_{\substack{1\le j\le p^r-1\\ p\mid j}}H_j(ji)^{r-k}$. Writing $j=p\ell$ and using $H_{p\ell}=2i^{-p\ell}\eta^{-1}$ from Proposition~\ref{prop:expansion}\textup{(ii)} gives
$B_{r-k}=p^{r-k}u_{r,k}T_{r,k}$, where
$u_{r,k}:=2\eta^{-1}i^{r-k}$ and
$T_{r,k}:=\sum_{\ell=1}^{p^{r-1}-1}\ell^{r-k}i^{-p\ell}$.
By Lemma~\ref{lem:Sr}\textup{(iii),(iv)} and Proposition~\ref{prop:expansion}\textup{(i)},
$u_{r,k}\in S_r^\times$, and hence
$B_{r-k}=0\iff p^{r-k}T_{r,k}=0$.

Recall that in our previous analysis we passed to $S_r/pS_r$, since we wanted both
a general criterion for determining whether $L$ is a period, as in the discussion
preceding Lemma~\ref{lem:bn}, namely $\bar X_n=\bar0$, and a setting in which the
Vandermonde argument could be applied. Thus, passing to the quotient gave us something
intuitively useful to work with, which encouraged us again to try applying the same idea here
as well. By using Lemma~\ref{lem:Sr}\textup{(v)}, we pass to $S_r/p^kS_r$ and examine what
happens to $T_{r,k}$. Rather surprisingly, the resulting expression exhibits a
self-similarity with the frequency expansion used for $S_k$. The remaining case
$k=r$, with $r\ge1$, will be treated separately. We can now
present our findings.

\begin{lemma}\label{lem:Bdiv}
For $1\le k\le r-1$ and $r\geq2$
$$
B_{r-k}=0\iff p^k\mid E_{r-k}.
$$
\end{lemma}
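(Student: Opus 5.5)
The plan is to reduce both sides of the equivalence to a single vanishing statement in $S_k$, where the sum $T_{r,k}$ turns out to be, up to a unit, the frequency expansion of Proposition~\ref{prop:expansion} at level $k$ evaluated at $n=r-k$.

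\textbf{Step 1 (reduction).} Put $n:=r-k$, so $1\le n\le r-1$. From the discussion preceding the lemma, $B_{n}=0\iff p^{n}T_{r,k}=0$ in $S_r$. Lemma~\ref{lem:Sr}\textup{(v)}, applied with index $k$ (valid since $1\le k\le r-1$), turns this into $T_{r,k}\in p^kS_r$. Equivalently, the image $\bar T_{r,k}$ vanishes in $S_r/p^kS_r$. I would then identify $S_r/p^kS_r$ with $S_k$ through the natural surjection $(\Z/p^r\Z)[x]/(x^2+1)\to(\Z/p^k\Z)[x]/(x^2+1)$. Its kernel is $p^kS_r$, which one sees coefficientwise via the unique representation $a+bi$ of Lemma~\ref{lem:Sr}\textup{(i)}. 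From now on I work in $S_k$ with $\zeta:=i^{-p}$.

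\textbf{Step 2 (folding the sum modulo $p^k$).} The sum to analyse is $T=\sum_{\ell=1}^{p^{r-1}-1}\ell^{n}\zeta^\ell$; adding the term $\ell=0$ changes nothing since $n\ge1$. Write $\ell=m+p^kt$ with $0\le m\le p^k-1$ and $0\le t\le p^{r-1-k}-1$; this is legitimate because $r-1\ge k$. In $S_k$ we have $\ell^{n}=m^{n}$, and $\zeta^\ell=\zeta^m w^t$ with $w:=\zeta^{p^k}\in\{i,-i\}$. Hence
\[
T=R\sum_{m=0}^{p^k-1}m^{n}\zeta^m,\qquad R:=\sum_{t=0}^{p^{r-1-k}-1}w^t=\frac{w^{p^{r-1-k}}-1}{w-1}.
\]
Since $w^{\text{odd}}=\pm i$, both the numerator and the denominator of $R$ lie in $\{\pm1\pm i\}$. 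These are units by Lemma~\ref{lem:Sr}\textup{(iv)}, so $R\in S_k^\times$. (The quotient formula is justified by $(w-1)\sum_t w^t=w^{p^{r-1-k}}-1$ together with $w-1$ being a unit.) Therefore $\bar T_{r,k}=0\iff\Sigma:=\sum_{m=0}^{p^k-1}m^{n}\zeta^m=0$ in $S_k$.

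\textbf{Step 3 (self-similarity).} If $p\equiv1\pmod4$ then $\zeta=i^{-1}$. If $p\equiv3\pmod4$ then $\zeta=i$, and I apply the conjugation $\sigma:S_k\to S_k$, $a+bi\mapsto a-bi$. This is a ring automorphism because $x\mapsto -x$ preserves $x^2+1$, and $\sigma(\Sigma)=\sum m^{n}i^{-m}$; since $\sigma$ is bijective, $\Sigma=0\iff\sigma(\Sigma)=0$. In either case it suffices to decide whether $\Sigma':=\sum_{m=0}^{p^k-1}m^{n}i^{-m}$ vanishes in $S_k$.

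Now apply Proposition~\ref{prop:expansion}\textup{(iii)} at level $k$, writing $N'=p^k$, $\eta_k$ and $H^{(k)}_m=2i^{-m}\eta_k^{-1}$, and take the $n$-th coefficient with $n\ge1$. The $m=0$ term vanishes because $0^{n}=0$, so
\[
(E_{n})_{S_k}=\sum_{m=1}^{p^k-1}2i^{-m}\eta_k^{-1}m^{n}i^{n}=2\eta_k^{-1}i^{n}\,\Sigma'.
\]
The factor $2\eta_k^{-1}i^{n}$ is a unit, so $\Sigma'=0\iff(E_{n})_{S_k}=0$. By Remark~\ref{rem:transfer} (injectivity of $\iota_k$), the latter holds iff $p^k\mid E_{r-k}$, which completes the chain of equivalences.

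The main obstacle I expect is Step 2 together with the bookkeeping of the ring identification in Step 1. One must check that the folding really is compatible with reduction, namely that $\ell^{n}$ depends only on $\ell\bmod p^k$ in $S_k$ and that the range $1\le\ell\le p^{r-1}-1$ splits into complete residue blocks. One must also check that the geometric factor $R$ is a unit rather than a multiple of $p$; this is exactly where the oddness of $p^{r-1-k}$ and the fact that $w=\pm i$ are used.
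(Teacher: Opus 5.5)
Your proposal is correct and follows essentially the same route as the paper: reduce $B_{r-k}=0$ to $\bar T_{r,k}=\bar0$ in $S_r/p^kS_r\cong S_k$, fold $\ell=m+p^kt$ to split off a unit geometric factor, and recognize the remaining sum as the level-$k$ frequency expansion at $n=r-k$. The differences are cosmetic. You show the geometric factor is a unit through its closed form rather than via $\bar1+w+w^2+w^3=\bar0$, and for $p\equiv3\pmod4$ you use the conjugation automorphism $i\mapsto -i$ of $S_k$ instead of the paper's reindexing $a\mapsto p^k-a$.
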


\begin{proof}  
Using Lemma~\ref{lem:Sr}\textup{(v)},
\[
B_{r-k}=0
\iff p^{r-k}T_{r,k}=0
\iff T_{r,k}\in p^kS_r
\iff \overline{T}_{r,k}=\bar0
\quad\text{in }S_r/p^kS_r.
\]
Put $n=r-k$ and  
$q=p^{r-1-k}$. Since $p^{r-1}=qp^k$ and $n\ge1$, adjoining the zero  
term and grouping $\ell=a+jp^k$ yields  
\[  
\overline{T}_{r,k}  
=  
\left(\sum_{j=0}^{q-1}\bar i^{-p^{k+1}j}\right)  
\left(\sum_{a=1}^{p^k-1}\bar a^{\,n}\bar i^{-pa}\right).  
\]  
Abbreviate $w:=\bar i^{-p^{k+1}}$. Since $w\in\{\pm\bar i\}$, we have \(w^4=\bar1\) and \(\bar1+w+w^2+w^3=\bar0\). As \(q\) is odd, \(\sum_{j=0}^{q-1}w^j\) equals either \(\bar1\) or \(w\), according as \(q\equiv1\) or \(3\pmod4\). Hence the factor $\sum_{j=0}^{q-1}w^j$ lies in \(\{\bar1,\pm\bar i\}\), all of which are units since \(1,\pm i\in S_r\) are units by Lemma~\ref{lem:Sr}\textup{(iii)}, and the quotient homomorphism sends units to units.
 If $p\equiv1\pmod4$, then $\bar i^{-pa}=\bar i^{-a}$, so no reindexing is needed;   
if $p\equiv3\pmod4$, then $\bar i^{-pa}=\bar i^a$, and reindexing $a\mapsto p^k-a$ gives
$\sum_{a=1}^{p^k-1}\bar a^n\bar i^a
=\sum_{a=1}^{p^k-1}\overline{(p^k-a)}^{\,n}\bar i^{\,p^k-a}$.
Since $\overline{(p^k-a)}^{\,n}=(-\bar1)^n\bar a^n$ and
$\bar i^{\,p^k-a}=\bar i^{\,p^k}\bar i^{-a}$, the second factor changes only by the unit
$(-\bar1)^n\bar i^{\,p^k}$, again by Lemma~\ref{lem:Sr}\textup{(iii)} and the same observation. Hence, in both cases we have
\[  
\overline{T}_{r,k}=\bar0  
\iff   
\sum_{a=1}^{p^k-1}\bar a^{\,n}\bar i^{-a}=\bar0   
\]  
Surprisingly, the expression on the right is exactly of the same form as the sum appearing in the
$n$-th coefficient, for $n\ge1$, of the frequency expansion in
Proposition~\ref{prop:expansion}\textup{(iii)} with $r=k$, so that the ambient
ring is $S_k$. Indeed, writing $F_n^{(k)}$ and $\eta_k$ for the corresponding
objects in $S_k$, taking the $n$-th coefficient in
Proposition~\ref{prop:expansion}\textup{(iii)}, and noting that the zero-frequency
term $a=0$ vanishes since $n\ge1$, gives
$F_n^{(k)}=2\eta_k^{-1}i^n\sum_{a=1}^{p^k-1}a^ni^{-a}$.
To make this connection precise, the natural reduction $S_r\to S_k$ is surjective with kernel
$p^kS_r$, so the First Isomorphism Theorem \cite[p.~142]{Aluffi}
gives $S_r/p^kS_r\cong S_k$. Under this isomorphism, and since the prefactor $2\eta_k^{-1}i^n$ is a unit by
Lemma~\ref{lem:Sr}\textup{(iii),(iv)} and
Proposition~\ref{prop:expansion}\textup{(i)}, we have
\[
F_n^{(k)}=0
\iff
\sum_{a=1}^{p^k-1}a^ni^{-a}=0\quad\text{in }S_k
\iff
\sum_{a=1}^{p^k-1}\bar a^{\,n}\bar i^{-a}=\bar0
\quad\text{in }S_r/p^kS_r
\iff
\overline{T}_{r,k}=\bar0.
\]
Combining this with the preceding equivalences gives
\[
B_{n}=0\iff F_n^{(k)}=0\iff [E_n]_{p^k}=0\iff p^k\mid E_n.
\]
Since $n=r-k$, the result follows.
\end{proof}
We have now analyzed the case $r\ge2$ and $1\le k\le r-1$. We next consider the remaining case $k=r$, with $r\ge1$. In this case $B_{r-k}=B_0$ for every $r\ge1$, so we first evaluate $B_0$ in the following lemma.

\begin{lemma}\label{lem:B0}
For every odd prime $p$ and $r\ge1$, one has $B_0\in S_r^\times$; in particular,
$B_0\ne0$.
\end{lemma}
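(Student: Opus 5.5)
The plan is to evaluate $B_0$ in closed form and check that the result is $\pm1$. By definition,
\[
B_0=\sum_{\substack{0\le k\le p^r-1\\ p\mid k}}H_k(ki)^0 .
\]
With the convention $\alpha^0=1$ (in particular $0^0=1$), every term reduces to $H_k$. Keep the $k=0$ term separately and write $k=p\ell$ for $1\le\ell\le q-1$, where $q:=p^{r-1}$. Proposition~\ref{prop:expansion}(ii) then gives
\[
B_0=H_0+2\eta^{-1}\sum_{\ell=1}^{q-1}w^\ell,\qquad w:=i^{-p}\in\{i,-i\}.
\]
If $r=1$ the sum is empty and $B_0=H_0=\pm1$ by Proposition~\ref{prop:expansion}(ii), which is a unit. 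So we may assume $r\ge2$.

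Next we evaluate the geometric sum exactly as in the proof of Lemma~\ref{lem:Bdiv}. Since $w^4=1$ and $1+w+w^2+w^3=0$, and $q$ is odd, we have $\sum_{\ell=0}^{q-1}w^\ell=1$ if $q\equiv1\pmod4$ and $=1+w+w^2=w$ if $q\equiv3\pmod4$. Removing the $\ell=0$ term, $\sum_{\ell=1}^{q-1}w^\ell$ equals $0$ in the first case and $w-1$ in the second.

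In the first case $B_0=H_0=\pm1$, which is a unit by Proposition~\ref{prop:expansion}(ii).

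In the second case, $q=p^{r-1}\equiv3\pmod4$ forces $p\equiv3\pmod4$ with $r-1$ odd. Hence $N=p^r\equiv1\pmod4$, which gives $H_0=1$ and $\eta=1-i$ by Proposition~\ref{prop:expansion}(i),(ii). Moreover $w=i^{-p}=i^{-3}=i$. Therefore
\[
B_0=1+\frac{2(i-1)}{1-i}=1-2=-1,
\]
again a unit by Lemma~\ref{lem:Sr}(iii).

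In all cases $B_0\in\{\pm1\}\subseteq S_r^\times$. Since $1\ne0$ in $S_r$ by Lemma~\ref{lem:Sr}(ii), it follows that $B_0\ne0$.

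The only delicate step is the case bookkeeping: tracking $q\bmod4$, $N\bmod4$ and $w$ simultaneously, and making sure the $0^0=1$ convention correctly puts $H_0$ into $B_0$. I would double-check the second case numerically, for instance at $p=3$, $r=2$, where $B_0=F_0-U_0$ with $F_0=E_0=1$.
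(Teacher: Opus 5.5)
Your proof is correct and follows essentially the paper's route: you write $B_0=H_0+2\eta^{-1}\sum_{\ell=1}^{q-1}(i^{-p})^\ell$ and evaluate the geometric sum explicitly, and the values you get ($B_0=1$ for $p\equiv1\pmod4$, $B_0=-1$ for $p\equiv3\pmod4$) agree with the paper's. The only difference is that the paper avoids your case split on $p^{r-1}\bmod4$. It uses the identity $1+z^M+2\sum_{\ell=1}^{M-1}z^\ell=(1+z)(1-z^M)(1-z)^{-1}$ and cancels $1-z^M$ against $\eta=i(z^M-1)$, which gives $B_0=i(1+z)(1-z)^{-1}$ uniformly as a product of units.
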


\begin{proof}
By Proposition~\ref{prop:expansion}\textup{(ii)} and above definition of $B_n$, one has
$B_0=H_0+\sum_{\ell=1}^{p^{r-1}-1}H_{p\ell}
=\eta^{-1}\left(1+i^{-p^r}+2\sum_{\ell=1}^{p^{r-1}-1}i^{-p\ell}\right)$.
Put $M:=p^{r-1}$ and $z:=i^{-p}\in S_r$, where $\eta=i(z^M-1)$ is a unit by Proposition~\ref{prop:expansion}\textup{(i)}. Since $p$ is odd,
$z\in\{\pm i\}$, so $1+z,1-z\in\{1+i,1-i\}$ are units by
Lemma~\ref{lem:Sr}\textup{(iv)}. Moreover, $z^M=i^{-p^r}$,$z^{\ell}=i^{-p\ell}$ and
$1+z^M+2\sum_{\ell=1}^{M-1}z^\ell
=(1+z)\sum_{\ell=0}^{M-1}z^\ell
=(1+z)(1-z^M)(1-z)^{-1}$.
Hence
$B_0=\eta^{-1}(1+z)(1-z^M)(1-z)^{-1}
=i(1+z)(1-z)^{-1}$, then we conclude $B_0\in S_r^\times$.
\end{proof}

Recall that $E_0=E_1=E_2=1$, $E_3=2$, $E_4=5$, $E_5=16$, and $E_6=61$, as listed in OEIS~\cite{OEIS111}. Lemma~\ref{lem:B0} shows that $B_0\ne0$. We know $p^r\nmid E_0$ and when $k=r$ for $r\geq1$ the case
$B_0=0\iff p^r\mid E_0$ holds. Thus, considering this with Lemma~\ref{lem:Bdiv}, for $r\geq1$  we have
$B_{r-k}=0\iff p^k\mid E_{r-k}$ for every $1\le k\le r$.
When combined with Lemma~\ref{lem:Bcriterion} we have for every $r\ge1$ and $1\le k\le r$,
\[
s(p^r)\le r-k
\iff
p\mid E_{r-1},\quad p^2\mid E_{r-2},\quad\ldots,\quad p^k\mid E_{r-k}.
\]

In the same way as we did above for the terms $B_{r-k}$, we can now determine the
preperiod directly from these divisibility conditions. For every $r\ge1$, if
$p\nmid E_{r-1}$, then $s(p^r)=r$. For $r\ge2$, if, for some
$1\le k\le r-1$, $p\mid E_{r-1},p^2\mid E_{r-2},\ldots,p^k\mid E_{r-k}$ but
$p^{k+1}\nmid E_{r-k-1}$, then $s(p^r)=r-k$. We can therefore summarize
the possibilities in the following table.
\begin{table}[H]
\centering
\caption{Preperiod determination table for a given $r\ge1$}
\label{tab:preperiod}
\[
\left.
\begin{array}{c|c}
\text{condition} & \text{preperiod}\\ \hline
p\nmid E_{r-1}
& s(p^r)=r\\[2mm]

p\mid E_{r-1},\quad p^2\nmid E_{r-2}
& s(p^r)=r-1\\[2mm]

p\mid E_{r-1},\quad p^2\mid E_{r-2},\quad p^3\nmid E_{r-3}
& s(p^r)=r-2\\[2mm]

\vdots
& \vdots\\[2mm]

p\mid E_{r-1},\quad p^2\mid E_{r-2},\quad\ldots,\quad
p^k\mid E_{r-k},\quad p^{k+1}\nmid E_{r-k-1}
& s(p^r)=r-k\\[2mm]

\vdots
& \vdots\\[2mm]

p\mid E_{r-1},\quad p^2\mid E_{r-2},\quad\ldots,\quad
p^{r-1}\mid E_1,\quad p^r\nmid E_0
& s(p^r)=1\\[2mm]

p\mid E_{r-1},\quad p^2\mid E_{r-2},\quad\ldots,\quad
p^{r-1}\mid E_1,\quad p^r\mid E_0
& s(p^r)=0
\end{array}
\hspace{1.5em}
\right\}
\qquad r+1\text{ rows}
\]
\end{table}
Let us explain how the table is to be read. Fix $r\ge1$. There are then $r+1$ possible cases; for instance, when $r=4$, the table has five rows, corresponding to the possible preperiods $4,3,2,1,0$. We begin with the first row and check its nondivisibility condition. If it is satisfied, the preperiod is determined. If not, the corresponding divisibility condition holds, which allows us to move to the next row. Repeating this procedure, we proceed down the table until the first nondivisibility condition is met. Moreover, only the rows for which all indices are defined are relevant. For example, when $r=2$, the possible preperiods are $2,1,0$, so the case $r-2=0$ is given by the final row, not by the displayed schematic $r-2$ row, which would produce the undefined term $E_{-1}$. In general, any row producing an index $j<0$ in $E_j$ is simply not applicable for that value of $r$.The remaining nuances are left to the reader, as they are clear from the general pattern of the table.

The final row is included only for completeness, so that all formally possible values of the preperiod are displayed. In fact, this case never occurs. Since $E_0=1$, we always have $p^r\nmid E_0$. Thus the procedure necessarily stops at or before the row corresponding to $s(p^r)=1$, and consequently $s(p^r)\ge1$.

\begin{theorem}[Ramassamy's preperiod conjecture]\label{thm:smallest-counterexample}
The conjecture $s(p^r)=r$ is false. Its smallest
counterexample among odd prime powers is
\[
5^5=3125,
\]
for which $s(5^5)=4$.
\end{theorem}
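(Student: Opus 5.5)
The plan is to read everything off the preperiod determination table (Table~\ref{tab:preperiod}), which was derived from Lemmas~\ref{lem:Bcriterion}, \ref{lem:Bdiv} and~\ref{lem:B0}. Its first row says that $s(p^r)=r$ if and only if $p\nmid E_{r-1}$. So deciding whether $s(p^r)=r$ reduces to one divisibility check on a small Euler number. The only values needed are $E_0=E_1=E_2=1$, $E_3=2$, $E_4=5$, $E_5=16$ and $E_6=61$.

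\emph{Step 1: the counterexample.} Take $p=5$ and $r=5$. Then $E_{r-1}=E_4=5$ is divisible by $5$, so the first row does not apply. For the second row we need $E_{r-2}=E_3=2$, and $25\nmid 2$. Hence the row ``$p\mid E_{r-1}$, $p^2\nmid E_{r-2}$'' applies and gives $s(5^5)=r-1=4<5$. This disproves the conjectured equality $s(p^r)=r$ in \eqref{eq:conj}.

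\emph{Step 2: minimality.} We must show $s(p^r)=r$ for every odd prime power $p^r<3125$. By the first row of the table it suffices to check $p\nmid E_{r-1}$. We split by the exponent $r$, using that $p^r<3125$ bounds $r$ in terms of $p$:
\begin{itemize}
\item For $r=1$, $E_0=1$ is not divisible by any prime.
\item For $p\ge 7$ we have $p^5\ge 7^5>3125$, so $r\le 4$. The indices then satisfy $r-1\le 3$, and $E_0,\dots,E_3\in\{1,2\}$ have no odd prime factor.
\item For $p=5$, the condition $5^r<3125$ gives $r\le4$, and again $E_0,\dots,E_3\in\{1,2\}$ suffices.
\item For $p=3$, we have $3^7=2187<3125<3^8$, so $r\le7$ and the indices satisfy $r-1\le 6$. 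None of $1,1,1,2,5,16,61$ is divisible by $3$.
\end{itemize}
Thus every odd prime power below $3125$ satisfies $s(p^r)=r$, while $5^5$ does not, so $5^5$ is the smallest counterexample.

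\emph{Main obstacle.} There is no real obstacle once the table is available; the depth lies entirely in Lemma~\ref{lem:Bdiv}. The step requiring care is the bookkeeping in Step~2: one must cover all odd primes by bounding $r$ through $p^r<3125$, handle the case $p=3$, which allows the largest exponent, and confirm that for $r=5$ and $p=5$ the table rows used involve only defined indices $E_4$ and $E_3$.
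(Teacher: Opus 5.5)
Your proposal is correct and follows essentially the same route as the paper. Both proofs read $s(5^5)=4$ off the second row of Table~\ref{tab:preperiod}, using $5\mid E_4$ and $25\nmid E_3$. Both prove minimality from the same facts: $E_0,\dots,E_3\in\{1,2\}$ excludes every case with $r\le 4$, and the only odd prime powers below $3125$ with $r\ge5$ are $3^5,3^6,3^7$, where $3$ divides none of $E_4=5$, $E_5=16$, $E_6=61$. The only difference is cosmetic: you organize the minimality check by the prime ($p\ge7$, $p=5$, $p=3$), whereas the paper organizes it by the exponent.
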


\begin{proof}
For $r=1$, we have $p\nmid E_0=1$, so the criterion above gives
$s(p)=1$, and hence no counterexample occurs. 
Since $E_1=E_2=1$ and $E_3=2$, none is divisible by an odd prime, so
no counterexample is possible for $r=2,3,4$. For $r=5$, we have
$E_4=5$, so $p=5$ gives $5\mid E_4$ and moreover, $5^2\nmid E_3=2$, so the table above gives
\[
s(5^5)=4.
\]

It remains to verify minimality. Having excluded $r\le4$, suppose
$p^r<5^5$ with $r\ge5$. Then necessarily $p=3$, since $p\ge5$ would
give $p^r\ge5^5$. As $3^7<5^5<3^8$, the only remaining cases are
$3^5,3^6,$ and $3^7$. However, $3\nmid E_4=5$, $3\nmid E_5=16$, and
$3\nmid E_6=61$, so none is a counterexample. Hence $5^5$ is the
smallest odd prime power for which $s(p^r)\ne r$.
\end{proof}

The divisibility criterion table developed above is particularly well suited
to computational searches, since it reduces the determination of the
preperiod to a sequence of elementary integer divisibility tests.
Ramassamy carried out a computational search over odd prime powers
$p^r\le10^3$ \cite{R}, which is also why he missed the first
counterexample, $3125$. We first extended this search to all odd prime
powers $p^r\le10^{300}$. In this range, we found $461$ exceptions to
the conjectured equality $s(p^r)=r$; in every such case, however, the
preperiod was $r-1$.

This naturally led us to ask whether there are cases in which the
preperiod is $r-2$. A search designed specifically to detect such cases
found no example up to $10^{1500}$. Extending the range to $10^{2000}$,
however, produced the example $s(43^{980})=978$.
We then asked whether there are cases with preperiod at most $r-3$. Rather
interestingly, an exhaustive search found no such example among odd
prime powers $p^r\le10^{100000}$.
Therefore, these computations suggest that the phenomenon may be universal,
leading us to the following conjecture.
\begin{conjecture}\label{conj:preperiod-lower}
For every odd prime $p$ and every $r\ge2$,
\[
s(p^r)\ge r-2.
\]
\end{conjecture}

\begin{conjecture}
If Conjecture~\ref{conj:preperiod-lower} is false, then for every
integer $k\ge3$ there exist an odd prime $p$ and an integer $r\ge 2$ such that
\[
s(p^r)=r-k.
\]
\end{conjecture}

We leave this conjecture as an open problem.
\section*{Declaration on the use of generative AI}

Generative AI models (Anthropic Claude Fable 5 and Claude Opus 5;
OpenAI GPT-5.6 Sol) were used extensively in the preparation of this
manuscript. The mathematics is the author's: the author developed the
results and the proof ideas, determined the individual steps of the
arguments and the route to be followed at each stage, and supplied the key
devices on which the proofs turn, as well as the overall organization of
the paper and the arrangement of the sections and lemmas. This material was
given to the models in the form of instructions and sketches, pen and paper drafts,  together with
detailed directions concerning its presentation.

The models worked this material up into finished written form. The proofs
as they appear here were composed by the models from the author's ideas,
steps and instructions; the remaining prose of the manuscript, the \LaTeX{}
source and the typesetting were generated by the models from draft material supplied by the author. The computational analysis of the search for counterexamples, based on the method and theoretical framework developed by the author and summarized in Table~\ref{tab:preperiod} associated with Section~\ref{sec:preperiod}, was likewise performed by the models under the author's direction and instructions.

The author has independently reviewed and verified the mathematical
statements, proofs, computations, references, and code presented in the
manuscript, and takes full responsibility for their accuracy and for the
entire contents of the paper.

\end{document}